\DeclareMathOperator{\im}{im}
\def\R{\mathbb{R}}
\def\Z{\mathbb{Z}}
\def\Q{{\mathbb{Q}}}
\def\C{{\mathbb{C}}}
\def\lmat{\left(\begin{smallmatrix}}
\def\rmat{\end{smallmatrix}\right)}
\def\sm{\setminus}
\def\zt{\Bbb{Z}[t^{\pm 1}]}
\def\wti{\widetilde}
\def\what{\widehat}
\def\ll{\langle}
\def\rr{\rangle}
\def\i{\iota}
\def\ol{\overline}
\def\g{\gamma}
\def\l{\lambda}
\theoremstyle{plain}
\newtheorem{theorem}{Theorem}[section]
\newtheorem{lemma}[theorem]{Lemma}
\newtheorem{corollary}[theorem]{Corollary}
\newtheorem{claim}[theorem]{Claim}
\newcommand{\eps}{\varepsilon}
\begin{document}
\title{Twist spinning of knots and metabolizers of Blanchfield pairings}
\author{Stefan Friedl}
\address{Fakult\"at f\"ur Mathematik\\ Universit\"at Regensburg\\   Germany}
\email{sfriedl@gmail.com}
\author{Patrick Orson}
\address{University of Edinburgh\\United Kingdom}
\email{patrickorson@gmail.com}
\date{\today}
\begin{abstract}
In a classic paper Zeeman introduced the $k$-twist spin of a knot $K$  and showed that the exterior of a twist spin fibers over $S^1$. In particular this result shows that the knot $K\#-K$ is doubly slice. In this paper we give a quick proof of Zeeman's result. The $k$-twist spin of $K$ also gives rise to two metabolizers for $K\#-K$  and we determine these two metabolizers precisely.

\end{abstract}
\maketitle

\section{Introduction}

Throughout this note we fix a category \textit{CAT} where  \textit{CAT}$=$\textit{DIFF}, \textit{PL} or \textit{TOP}. For $k\in \Z$ and a knot $K\subset S^{n+2}$, Zeeman \cite[p.~487]{Ze65} introduced the construction of a knot $S_{k}(K)\subset S^{n+3}$, called the $k$-twist spin of $K$.
We recall the definition in Section \ref{section:proofs}.
The following theorem is the main result in \cite{Ze65}.
 
 \begin{theorem}\label{mainthm}
If $k\ne 0$, then the $($closed$)$ knot exterior $S^{n+3}\sm \nu S_k(K)$ fibers over $S^1$, where the fiber is the result of removing an open ball from the $k$-fold branched cover of $K$.
\end{theorem}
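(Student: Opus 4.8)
The plan is to show that the closed knot exterior $E:=S^{n+3}\sm\nu S_k(K)$ is, essentially by unwinding the definition, the mapping torus of a diffeomorphism $\tau\colon V\to V$ of finite order $k$, where $V$ is the $k$-fold cyclic branched cover $\Sigma_k(K)$ of $(S^{n+2},K)$ with an open ball removed and $\tau$ is (the restriction of) the canonical deck transformation. The fibration $E\to S^1$ is then the mapping-torus projection, its fibre is $V$, and the theorem follows --- together with the bonus that the monodromy has finite order, which is Zeeman's main point.

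First I would unwind the construction of $S_k(K)$ recalled in Section~\ref{section:proofs}. Fix a knotted ball pair $(D^{n+2},D^n_K)$ whose double with a trivial pair is $(S^{n+2},K)$; let $Y:=\overline{D^{n+2}\sm\nu D^n_K}$ be its exterior, with $\partial Y=P\cup_T Q$ where $P=\overline{S^{n+1}\sm\nu S^{n-1}}$ is inherited from $\partial D^{n+2}$, $Q$ is the meridional tube of $D^n_K$, $T=P\cap Q$, and $\mu\subset Q$ is the meridian of $K$. Zeeman's construction builds $S^{n+3}$ by spinning $D^{n+2}$ once around an auxiliary circle while rotating it through total angle $2\pi k$, with $S_k(K)$ the trace of $D^n_K$, completed along the trivial part of the knot by a standard $S^{n+1}\times D^2$. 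Passing to the exterior, $E$ is assembled from the trace of $Y$ under this motion --- a $Y$-bundle over the auxiliary circle whose monodromy records the $2\pi k$ twist --- together with a collar piece $P\times D^2$ attached along $P\times S^1$.

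The heart of the matter is to recognise the resulting object as a mapping torus over a \emph{different} circle, namely the one classifying the generator of $H^1(E;\Z)\cong\Z$. Concretely I would cut $E$ along a Seifert surface $V$ of $S_k(K)$ produced by the construction and check that (a) $V$ is diffeomorphic to $\Sigma_k(K)$ minus an open ball, and (b) the result of cutting is a product $V\times[0,1]$. For (a) the point is that the cyclic reassembly of the pieces of $Y$ forced by the $2\pi k$ rotation is precisely the standard surgery description $\Sigma_k(K)=X_k\cup_{S^n\times S^1}(S^n\times D^2)$ of the cyclic branched cover, with $X_k$ the $k$-fold cyclic cover of the knot exterior $S^{n+2}\sm\nu K$: the rotation makes $\mu$ wind $k$ times, so the trivial piece is reglued along $k\mu$, the collar $P\times D^2$ provides the branch handle $S^n\times D^2$, and one ball's worth of it is deleted. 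For (b), cutting along $V$ "unrolls" the sweep over $[0,1]$, and the product structure on the complement is visible from the explicit form of $S^{n+3}$. Finally, taking the deleted ball to be a $\tau$-invariant one --- for instance $D^n\times D^2$ inside the branch handle, on which the deck transformation $\tau$ acts as $\id\times(\text{rotation by }2\pi/k)$ --- makes $\tau$ restrict to $V$ and identifies $E$ with the mapping torus of $\tau$.

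The step I expect to be the main obstacle is (a): translating the $2\pi k$ rotation, which a priori only sees the ambient ball $D^{n+2}$, into statements about the knot exterior once $D^n_K$ is deleted --- both that the gluing curve becomes $k\mu$ (so that the branched-cover surgery presentation appears) and that the sweep's monodromy is the honest finite-order deck transformation rather than merely something isotopic to it. The hypothesis $k\ne 0$ enters precisely here: when $k=0$ there is no twisting, $V$ is a single Seifert surface for $S_k(K)$, and cutting along it is a product only if $K$ is already fibred.
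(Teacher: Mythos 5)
There is a genuine gap. Your plan rests on two assertions: (a) that the construction of $S_k(K)$ ``produces'' an embedded copy $V$ of the $k$-fold branched cover minus a ball inside $E=S^{n+3}\sm\nu S_k(K)$, and (b) that cutting $E$ along $V$ gives a product $V\times[0,1]$. Neither is supplied with a mechanism, and (b) in particular is essentially the whole theorem restated. The construction by itself does not hand you such a $V$: the natural slices of the sweep are copies of the disk-knot exterior $Y=D^{n+2}\sm\nu J$, not of the branched cover, and the fiber of the fibration you are after is \emph{not} a slice of the sweep --- it wraps $k$ times around the sweep circle. To locate $V$ and to trivialize its complement you need an additional choice: either a Seifert hypersurface for the disk knot $J$ rel boundary, or (as the paper does) a map $\varphi\colon Y\to S^1$ extending the projection $Y\cap\partial D^{n+2}=S^1\times D^n\to S^1$, which exists by elementary obstruction theory. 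With this in hand the fibration can simply be written down on the decomposition $S^1\times Y\cup_{\Phi_k}D^2\times S^1\times D^n$ as $(z,x)\mapsto z^{-k}\varphi(x)$ on the first piece and the $S^1$-projection on the second; the fiber is $\{(z,x)\in S^1\times Y\mid \varphi(x)=z^k\}$, i.e.\ the $k$-fold cyclic cover of the knot exterior, capped off by a $2$-handle coming from $D^2\times S^1\times D^n$, and the hypothesis $k\ne 0$ enters in checking that this map is a bundle. Your step (a), the identification of $V$ with $\Sigma_k(K)$ minus a ball by ``cyclic reassembly,'' also requires precisely this cut ($\varphi^{-1}(\mathrm{pt})$) to even make sense, and you yourself flag it as the unresolved obstacle.

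So while your mapping-torus formulation is the right picture (and, if completed, would yield the stronger conclusion that the monodromy is the finite-order deck transformation, which the paper's proof does not address), the decisive construction --- the map $\varphi$ or an equivalent Seifert hypersurface, and the resulting explicit description of the fiber and of the bundle projection --- is missing, and the claim that ``the product structure on the complement is visible from the explicit form of $S^{n+3}$'' does not hold as stated.
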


A \emph{slice disk} for a knot $K\subset S^{n+2}$ is an embedded $(n+1)$-ball $B$ in $D^{n+3}$ such that $\partial B=K$. If $K$ admits a slice disk, then we say that 
$K$ is  \textit{slice}. Note that if $K$ is slice, then the double of the slice disk gives rise to an $(n+1)$-sphere in $S^{n+3}$ whose intersection with $S^{n+2}$ is precisely $K$. 

A knot $K\subset S^{n+2}$ is called \textit{doubly slice}
if $K$ is the intersection of  an unknot $U\subset S^{n+3}$ with the equator sphere $S^{n+2}\subset S^{n+3}$. By the above a knot which is doubly slice is also slice, but note that in general the converse does not hold.
We refer to \cite{Su71,St78,Ki06} for more details.  

It is well-known that for any $k$ the intersection $S_k(K)\cap S^{n+2}$ is isotopic to $K\#-K$. 
Furthermore, it is straightforward to see that the fiber in Theorem \ref{mainthm} for $k=\pm 1$ is a ball.
As was pointed out by Sumners \cite[Corollary~2.9]{Su71} Zeeman's theorem therefore has the following corollary.

\begin{corollary}\label{maincor}
For any knot $K\subset S^{n+2}$ the connect sum $K\#-K$ is doubly slice.
\end{corollary}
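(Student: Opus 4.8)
The plan is to apply Theorem~\ref{mainthm} with $k=1$; the case $k=-1$ is entirely symmetric. Since the $1$-fold branched cover of $K$ is just $S^{n+2}$ itself, the fiber appearing in Theorem~\ref{mainthm} is $S^{n+2}$ with an open ball removed, namely the ball $D^{n+2}$ --- this is the elementary observation quoted just before the statement. Consequently the exterior $S^{n+3}\sm\nu S_1(K)$ is the total space of a fiber bundle over $S^1$ with fiber $D^{n+2}$; equivalently, it is the mapping torus of some self-homeomorphism $h$ of $D^{n+2}$.

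From this I would conclude that $S_1(K)$ is the \emph{unknot} in $S^{n+3}$. The mapping torus of a self-map of a contractible space is homotopy equivalent to $S^1$, so the exterior of $S_1(K)$ is a homotopy circle, and one may invoke the standard high-dimensional unknotting theorem (going back to Levine): a locally flat --- respectively smooth, respectively PL --- sphere $S^{n+1}\subset S^{n+3}$ whose exterior has the homotopy type of $S^1$ is unknotted. Its proof runs through the $s$-cobordism theorem, so some care is needed in low dimensions, where one instead simply cites Zeeman's original verification that the $1$-twist spin of every knot is unknotted; alternatively, in the \textit{TOP} category one sees it directly, since the Alexander trick together with the fact that every orientation-preserving self-homeomorphism of $S^{n+1}$ is isotopic to the identity force the bundle $D^{n+2}\to S^{n+3}\sm\nu S_1(K)\to S^1$ to be trivial, hence the exterior to be $S^1\times D^{n+2}$. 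Write $U:=S_1(K)\subset S^{n+3}$ for this unknot.

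It remains to assemble the pieces. By the well-known fact recalled in the text, $U\cap S^{n+2}=S_1(K)\cap S^{n+2}$ is isotopic to $K\#-K$; composing this isotopy with an ambient isotopy of $S^{n+3}$ preserving the equatorial sphere $S^{n+2}$, we may arrange that $U\cap S^{n+2}$ is exactly $K\#-K$. Thus $K\#-K$ is the intersection of the unknot $U$ with the equator sphere, which is precisely the definition of being doubly slice.

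The main obstacle is the middle step: the passage from ``the exterior fibers over $S^1$ with disk fiber'' (hence is a homotopy circle) to ``the knot is unknotted''. This is genuine input from high-dimensional knot theory rather than something visible by hand, and it is ultimately what forces one to invoke either the $s$-cobordism theorem or Zeeman's explicit unknotting of $1$-twist spins. The remaining steps --- identifying the fiber as a ball, recognising the exterior as a mapping torus, and transporting the equatorial cross-section isotopy --- are routine.
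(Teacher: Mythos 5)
Your overall skeleton matches the paper's: apply Theorem \ref{mainthm} with $k=\pm 1$, note that the fiber is a ball, conclude that $S_{\pm 1}(K)$ is trivial, and then cut with the equatorial sphere using the isotopy of $S_{\pm 1}(K)\cap S^{n+2}$ with $K\#-K$ (this is the paper's Lemma \ref{lem:kplus-k}; your isotopy-extension remark is the same implicit step). The difference lies in how you pass from ``disk fiber'' to ``unknotted'', and there your route is much heavier than the paper's and, in one case, not actually available. The paper's Corollary \ref{cor:sk1trivial} argues directly: a fiber of the fibration of the closed exterior is a Seifert manifold for $S_{\pm 1}(K)$ --- its boundary is a parallel copy of the knot, which is capped off by a collar inside the closed tubular neighbourhood --- so when the fiber is $D^{n+2}$ the knot bounds an embedded $(n+2)$-ball in $S^{n+3}$. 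Bounding a ball is precisely the notion of triviality used here (and in Sumners' definition of doubly slice), so the corollary follows at once from Theorem \ref{mainthm} and Lemma \ref{lem:kplus-k}, in every dimension and in each of the three categories, with no further input.

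You instead throw away the ball, keep only the homotopy type of the exterior, and then buy the ball back via the unknotting theorem, i.e.\ via the $s$-cobordism theorem. Besides being disproportionate, this step genuinely fails for $n=1$ in \textit{DIFF} and \textit{PL}: whether a smooth $2$-knot in $S^4$ whose exterior is a homotopy circle is smoothly unknotted is an open problem, so in that case your argument rests entirely on the fallback citations --- Zeeman's original unknotting of $1$-twist spins, which is circular relative to the paper's aim of a self-contained proof, or the \textit{TOP}-only argument. Your closing assessment, that the passage from ``fibered with disk fiber'' to ``unknotted'' requires genuine high-dimensional machinery, is the point to correct: the fiber itself is the ball you need, and once triviality is read as ``bounds a ball'' the middle step of your proof is immediate.
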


An unscientific poll among the authors and a wider group of topologists showed  that the statements of Theorem \ref{mainthm} and of Corollary \ref{maincor} are both well-known but that the proofs are less well understood. 
In this note we therefore present a short, self-contained proof of Theorem \ref{mainthm}.
Our approach is very  explicit, decomposing the exterior of the knot $S_k(K)$ into the union of two appropriately chosen components we simply write down the fibre bundle structures on each separately and then glue them together. This is not the first reproof of Theorem \ref{mainthm} and in \cite[Corollary 1.11]{GK78} a different technique is applied to recover the same result. A `disk knot version' of Corollary \ref{maincor} was also proved by Levine \cite[Theorem~C]{Lev83}.

We then turn our attention to the related algebra of Blanchfield forms.
Let $\Lambda=\Z[t^{\pm1}]$ and $\Omega=\Q(t)$. 
Given an odd-dimensional knot  $K\subset S^{2m+1}$ there exists a non-singular $(-1)^{m+1}$-hermitian pairing
\[ \lambda_K\colon H_m(S^{2m+1}\sm \nu K;\Lambda)\times H_m(S^{2m+1}\sm\nu K;\Lambda)\to \Omega/\Lambda,\]
known as the \emph{Blanchfield pairing}. 
We refer to \cite{Bl57} and \cite{Hi12} for details.
A \emph{metabolizer} for the Blanchfield pairing is a $\Lambda$-submodule $P\subset H:=H_m(S^{2m+1}\sm \nu K;\Lambda)$ such that 
\[ P=P^{\perp}:=\{v\in H\,|\, \lambda_K(v,w)=0\mbox{ for all }w\in P\}.\]

It is well-known that a slice disk gives rise to a metabolizer for the Blanchfield pairing of $K$. It also follows immediately from the definitions that $k$-twist spinning a knot gives rise to two slice disks for
$S_k(K)\cap S^{2m+1}$, which is isotopic to $K\#-K$. 

Our second main theorem determines the corresponding two metabolizers precisely. Here in the introduction we give a slightly informal statement.

\begin{theorem}\label{thm:metabsintro}
Let $K\subset S^{2m+1}$ be an oriented knot and let $k\in \Z$. We write $H=H_m(S^{2m+1}\sm \nu K;\Lambda)$.
 Then there exists an isomorphism 
$f\colon H\oplus H\to H_m(S^{2m+1}\sm\nu (K\#-K);\Lambda)$ which induces an isomorphism of Blanchfield forms
\[ \lambda_K\oplus -\lambda_K\to \lambda_{K\#-K}\]
such that  the two metabolizers corresponding to the two slice disks arising from twist spinning are 
\[ \{ v\oplus -v\,|\, v\in H\} \mbox{ and } \{ v\oplus -t^kv\,|\, v\in H\}.\]
\end{theorem}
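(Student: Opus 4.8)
The plan is to realise the two slice discs explicitly inside Zeeman's construction, decompose the twist-spin exterior accordingly, and compute with $\Lambda$-coefficients via Mayer--Vietoris.

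\emph{Geometry.} Writing $S^{2m+2}=\partial(D^{2m+1}\times D^2)=(D^{2m+1}\times S^1)\cup(S^{2m}\times D^2)$ and $S_k(K)=\Phi(A\times S^1)\cup(\partial A\times D^2)$ as in the definition, where $A\subset D^{2m+1}$ is the knotted disc obtained from $K$ by removing a ball and $\Phi(x,\sigma)=(\rho_{k\sigma}(x),\sigma)$ with $\rho$ the twisting rotation, one takes the equatorial $S^{2m+1}=\partial(D^{2m+1}\times D^1)$ for a chord $D^1\subset D^2$ with endpoints $\sigma_0,\sigma_1\in S^1$. The chord cuts $D^2$ into half-discs $D^2_\pm$ and $S^1$ into arcs $I_\pm$; one checks directly that $S^{2m+1}\cap S_k(K)$ is isotopic to $K\#-K$, that the half-balls $D^{2m+2}_\pm$ meet $S_k(K)$ in the two slice discs $D^{2m}_\pm$, and hence that $E_k:=S^{2m+2}\setminus\nu S_k(K)=W_+\cup_X W_-$ with $W_\pm=D^{2m+2}_\pm\setminus\nu D^{2m}_\pm$ and $X=S^{2m+1}\setminus\nu(K\#-K)$. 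By the standard argument (Poincar\'e--Lefschetz duality on $W_\pm$, ``half lives, half dies''), the metabolizer of $\lambda_{K\#-K}$ coming from a slice disc $D$ is $\ker(H_m(X;\Lambda)\to H_m(W_D;\Lambda))$, so it suffices to compute the two kernels $P_\pm$.

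\emph{The pieces.} The homeomorphism $\Phi^{-1}$ identifies $(D^{2m+1}\times I_\pm)\setminus\nu\Phi(A\times I_\pm)$ with $X_K^\circ\times I_\pm$, where $X_K=S^{2m+1}\setminus\nu K$ and $X_K^\circ$ is $X_K$ with an open ball removed, so that $W_\pm=(X_K^\circ\times I_\pm)\cup_\psi(D^{2m-1}\times S^1\times D^2_\pm)$, glued along $D^{2m-1}\times S^1\times I_\pm$ by a gluing $\psi$ carrying the meridional rotation $e^{ik\sigma}$. Since $H_m(X_K^\circ;\Lambda)\cong H$ and the cap and gluing region are $\Lambda$-homologically a circle with non-trivial action (hence $\Lambda$-acyclic in positive degrees), Mayer--Vietoris gives $H_m(W_\pm;\Lambda)\cong H$. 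Similarly $X=X_K^\circ\cup_{\text{tube}}X_{-K}^\circ$ is the exterior of the connected sum, so $H_m(X;\Lambda)\cong H\oplus H$ with summands the images of the two copies; taking $f$ to be this isomorphism carries $\lambda_K\oplus-\lambda_K$ to $\lambda_{K\#-K}$ by the usual behaviour of Blanchfield forms under $\#$. Identifying $H_m(W_\pm;\Lambda)\cong H$ via the $K$-copy $X_K^\circ\times\{\sigma_0\}$, the $K$-summand of $H_m(X;\Lambda)$ then maps to $H=H_m(W_\pm;\Lambda)$ by the identity in both cases, and the whole theorem reduces to the behaviour of the $-K$-copy $X_K^\circ\times\{\sigma_1\}$.

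\emph{The twist (the crux).} The step I expect to be the main obstacle is to show that $X_K^\circ\times\{\sigma_1\}$ includes into $W_+$ without net meridional twisting but into $W_-$ with $k$ full meridional twists, so that the second inclusion equals the first followed by multiplication by $t^{\pm k}$ on $H$. The mechanism: passing from the disc $D^{2m}_+$ to $D^{2m}_-$ replaces the arc $I_+$ by the complementary arc $I_-$, i.e.\ goes once around the $S^1$-factor of Zeeman's construction, and the class of that $S^1$ in $H_1(E_k)\cong\Z$ is $k$ (equivalently, going once around the base of the twist-spin fibration of Theorem~\ref{mainthm} contributes $t^k$), because $\rho_{k\sigma}$ drags a meridian of $K$ around $k$ times as $\sigma$ runs over $S^1$. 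Making this precise requires keeping careful track of the rotations $\rho_{k\sigma_0},\rho_{k\sigma_1}$ that close up the two copies of $X_K^\circ$, of the twist $\psi$ by which the caps are attached, and of the orientation-reversal in the identification $X_{-K}^\circ\simeq X_K^\circ$ built into $f$; I expect the sign in $t^{\pm k}$ to come out as $+k$ with the orientation conventions of the statement. Granting this, the map $H_m(X;\Lambda)\to H_m(W_+;\Lambda)$ is $v\oplus w\mapsto v+w$ and the map $H_m(X;\Lambda)\to H_m(W_-;\Lambda)$ is $v\oplus w\mapsto v+t^{-k}w$, whence $f^{-1}(P_+)=\{v\oplus -v\mid v\in H\}$ and $f^{-1}(P_-)=\{v\oplus -t^kv\mid v\in H\}$. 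As a check: when $k=\pm1$ the fibre of Theorem~\ref{mainthm} is a ball, so $S_k(K)$ is unknotted, $H_m(E_k;\Lambda)=0$, and Mayer--Vietoris forces $H_m(X;\Lambda)\xrightarrow{\ \sim\ }H_m(W_+;\Lambda)\oplus H_m(W_-;\Lambda)$; this is consistent precisely because $t-1$ acts invertibly on $H$ (as $\Delta_K(1)=\pm1$), which is also the condition making $\{v\oplus-v\}$ and $\{v\oplus -t^kv\}$ complementary submodules.
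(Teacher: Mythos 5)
Your overall route is the same as the paper's: the same decomposition of the twist-spin exterior along the equatorial $S^{2m+1}$, the same input that a slice disk yields the metabolizer $\ker(H_m(X;\Lambda)\to H_m(W_D;\Lambda))$ (Theorem \ref{thm:metab}), and the same identification of each half $W_\pm$ with the exterior of $K$ up to $\Lambda$-acyclic pieces. The genuine gap is exactly at the step you yourself flag as the crux and then defer with ``granting this'': you never prove that the inclusion of the second copy $X_K^\circ\times\{\sigma_1\}$ into $W_-$ differs from its inclusion into $W_+$ by multiplication by $t^{\pm k}$, and, more fundamentally, you never fix the identifications (base points, connecting paths, the identification of the $-K$ copy with $X_K$, and the twisted gluing $\psi$) with respect to which that comparison is even well-defined. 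This is not a routine verification one can wave at: it is essentially the entire content of the paper's proof of Theorems \ref{thm:metabseven} and \ref{thm:metabsodd}. There the rotation homotopy $h_s(z,p)=(e^{is}z,\rho_{e^{isk}}(p))$ on $W_0=W\cap(S^1\times D^{2m+1})$ is lifted to the infinite cyclic cover, one proves the compatibility $\i_{z'}^{-1}\circ\i_z=\widetilde{h}_s$ for the inclusions of the fibres of $q\colon \widetilde{W}_0\to S^1$ into $q^{-1}(I)$, and one identifies $\widetilde{h}_{2\pi}$ on $H=H_m(q^{-1}(1))$ as multiplication by $t^{-k}$; only then do the two kernels drop out of the Mayer--Vietoris comparison.

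Two further points show the bookkeeping you skip is not cosmetic. First, when $k$ is odd the disk knot sitting over $-1$ is $\rho_{-1}(J)$ rather than $J$, so the identification of the second copy with $X_K$ requires an extra path $\delta^{\eps}$, and the resulting metabolizers genuinely depend on that choice (they come out as $\{v\oplus -t^{(k+\eps)/2}v\}$ and $\{t^{(k-\eps)/2}v\oplus -v\}$ in Theorem \ref{thm:metabsodd}); your symmetric treatment of the two copies cannot detect this, and the clean statement $\{v\oplus-v\}$, $\{v\oplus -t^kv\}$ only emerges after absorbing such powers of $t$ into the chosen isomorphism $f$. Second, the assertion that your particular $f$ carries $\lambda_K\oplus-\lambda_K$ to $\lambda_{K\#-K}$ also needs an argument tied to the same identifications: in the paper this is done by observing that $h_\pi$ is an isotopy from $J$ to $J'=\rho_{-1}(J)$ and checking, with the orientation conventions of Lemma \ref{lem:kplus-k}, that $\lambda_{J'}(h_\pi v,h_\pi w)=-\lambda_J(v,w)$; citing ``the usual behaviour under connected sum'' does not by itself verify this for the specific maps you use to compute the kernels. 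Your surrounding Mayer--Vietoris computations and the $k=\pm1$ consistency check are fine, but as written the proposal assumes precisely the statement the proof has to establish.
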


We refer to Theorems \ref{thm:metabseven} and \ref{thm:metabsodd} for a much more precise, and consequently considerably longer, formulation. It was a surprise to the authors how difficult it was to make the statement of Theorem \ref{thm:metabsintro} rigorous and in Section \ref{section:metabs} we take great care to use precise arguments that keep track of the effects of changing base-points.

\subsection*{Acknowledgment.} We are grateful to the University of Glasgow for its hospitality and we wish to thank Danny Ruberman and Matthias Nagel for helpful comments. The second author wishes to thank his advisor Andrew Ranicki for his generous advice and patient guidance. We also wish to thank the referee for several helpful comments.

\section{Setup and Proof}\label{section:proofs}

Throughout this paper, given $k<l$ we view $S^k$ as the subset of $S^l\subset\R^{l+1}$ given by setting the first $l-k$ coordinates to zero.  
Given $k$ and $l$ we furthermore pick an identification of $D^k\times D^l$ with $D^{k+l}$. As usual we view $D^2$ also as a subset of $\C$. If $U$ is a submanifold of a manifold $V$ we use the notation $\nu U$ for an open tubular neighbourhood of $U$ in $V$.

Let $K\subset S^{n+2}$ be an oriented knot. We can  write $S^{n+2}=D^{n+2}\cup_{S^{n+1}}\tilde{D}^{n+2}$ as the union of two $(n+2)$-balls
in such a way that 
$K\cap \tilde{D}^{n+2}=0\times \tilde{D}^n$ is the trivial disk knot in $\tilde{D}^2\times \tilde{D}^n=\tilde{D}^{n+2}$.  We write $J:=K\cap D^{n+2}$, the other disk knot in the decomposition.

Given $z\in S^1$ we denote by 
\[ \begin{array}{rcl} \rho_z\colon D^{n+2}=D^2\times D^n&\to & D^{n+2}=D^2\times D^n\\
(w,x)&\mapsto & (zw,x)\end{array}\]
the rotation by $z$ in the $D^2$-factor. Note that $\rho_z$ restricts to the identity on $J\cap S^{n+1}$. 
Also note that we can and will assume that the decomposition $D^{n+2}=D^2\times D^n$ is oriented in such a way that for any $x\in S^{n+1}\sm \nu J$ the closed curve 
\[ \begin{array}{rcl} S^1&\to & D^2\times D^{n}\sm \nu J \\
z&\mapsto  & \rho_z(x)\end{array}\]
gives the oriented  meridian of $K$. 

Now let $k\in \Z$. In order to define the $k$-twist spin 
 of $K$, we use the following decomposition 
\[ S^{n+3}=S^1\times D^{n+2}\,\cup \, D^2\times S^{n+1}.\]
Denote by $\Phi_k$ the diffeomorphism
\[ \begin{array}{rcl} \Phi_k\colon S^1\times D^{n+2}&\to & S^1\times D^{n+2}\\
(z,x)&\mapsto & (z,\rho_{z^k}(x)).\end{array}\]
The $k$-twist spin $S_k(K)$ is then defined as 
\[ S_k(K):=
\underset{\subset S^1\times D^{n+2}}{\underbrace{ \Phi_k(S^1\times J)}}\,\,\cup\,\,
\underset{\subset D^2\times S^{n+1}}{\underbrace{ 
D^2\times  S^{n-1}}}.\]
More informally,  $S_k(K)$ is given by spinning the disk knot $J$ around the $S^1$-direction, performing $k$ twists around $J$ as you go, and then capping off the result by $D^2\times S^{n-1}$.

\begin{proof}[Proof of Theorem \ref{mainthm}]
The proof consists of two parts. We will first describe $S^{n+3}\sm \nu S_k(K)$ in a different, more convenient, way. We will then use this description to write down the promised fiber bundle over $S^1$. The first part is well-known, in fact this description of $S^{n+3}\sm \nu S_k(K)$ is also given in \cite[p.~201]{Fr05}.

We  write $Y:=D^{n+2}\sm  \nu J$.
Note that $Y\cap \partial D^{n+2}=S^{n+1}\sm \nu S^{n-1}$. 
As usual we can identify $S^{n+1}\sm \nu S^{n-1}$ with $S^1 \times D^n$.
Now we  see that 
\[ \begin{array}{rcl} S^{n+3}\sm \nu S_k(K)&=&S^1\times D^{n+2}\,\sm\, \Phi_k(S^1\times \nu J)\,\,\,\cup \,\,\,D^2\times  (S^{n+1}\sm \nu S^{n-1})\\
&=& \Phi_k(S^1\times Y)\,\,\,\cup \,\,\,D^2\times S^1 \times D^n.\end{array}\]
Note that  ${\Phi_k}$ restricts to an automorphism
of $S^1\times (Y\cap S^{n+1})=Y\cap \partial D^{n+2}=S^1\times S^1 \times D^n$.
We can thus glue $S^1\times Y$ and $D^2\times S^1 \times D^n$ together via the restriction of  ${\Phi_k}$ to $S^1\times S^1 \times D^n$.
The map
\[  S^1\times Y\,\,\cup_{{\Phi_k}} \,\,D^2\times S^1 \times D^n\to
\Phi_k(S^1\times Y)\,\,\cup \,\,D^2\times S^1 \times D^n,\]
which is given by $\Phi_k$ on the first subset and by the identity on the second subset, is then evidently a well-defined diffeomorphism.

We will  use this description of $S^{n+3}\sm \nu S_k(K)$ on the left to write down the fibre bundle structure over $S^1$. 
First, elementary obstruction theory shows that there exists a map $\varphi\colon Y\to S^1$ such that the restriction of $\varphi$ to $Y\cap \partial D^{n+2}=S^{n+1}\sm \nu S^{n-1}$ is just the projection map
 $S^{n+1}\sm \nu S^{n-1}=S^1 \times D^n\to S^1$.
 
\begin{claim}
The map
\begin{equation} \label{equ:fiber} \begin{array}{rcl} p\colon S^1\times Y&\to& S^1\\
(z,x)&\mapsto & z^{-k}\varphi( x)\end{array}\end{equation}
defines a fiber bundle with fiber
\[ \{ (z,x)\in S^1\times Y\,|\, z^{-k}\varphi(x)=1\}.\]
\end{claim}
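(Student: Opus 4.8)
The plan is to recognise the map $p$ as a composition that unwinds the $S^1$-coordinate, so that it becomes a projection of a product. First I would consider the map
\[
\Psi\colon S^1\times Y\to S^1\times Y,\qquad (z,x)\mapsto (z^{-k}\varphi(x),\,x)?
\]
— but this is not a homeomorphism unless $k=\pm 1$, so instead I would argue bundle-theoretically: the map $p$ factors as $S^1\times Y\xrightarrow{\ \id\times\varphi\ } S^1\times S^1\xrightarrow{\ \mu_k\ } S^1$, where $\mu_k(z,w)=z^{-k}w$. The second map $\mu_k\colon S^1\times S^1\to S^1$ is a fibre bundle (indeed a principal $S^1$-bundle) with fibre $S^1$, since it is the projection $S^1\times S^1\to S^1$ precomposed with the homeomorphism $(z,w)\mapsto(z,z^{-k}w)$. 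Therefore $p=\mu_k\circ(\id\times\varphi)$ is the pullback of the bundle $\mu_k$ along the map $\id\times\varphi\colon S^1\times Y\to S^1\times S^1$, and a pullback of a fibre bundle is a fibre bundle with the same fibre. This identifies the fibre of $p$ over $1\in S^1$ as claimed, namely $\{(z,x)\in S^1\times Y\mid z^{-k}\varphi(x)=1\}$.

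Alternatively, and perhaps more in keeping with the elementary, hands-on style of the paper, I would exhibit local trivialisations directly. Cover the base $S^1$ by two arcs $U_1,U_2$ and, over each, pick a continuous branch of the $k$-th root, i.e.\ a continuous section $s_j\colon U_j\to S^1$ of the $k$-th power map $z\mapsto z^k$ composed with the relevant sign, arranged so that $s_j(u)^{-k}=u\cdot(\text{correction})$. Concretely, over $U_j$ define
\[
h_j\colon p^{-1}(U_j)\to U_j\times F,\qquad (z,x)\mapsto \bigl(p(z,x),\,(z\cdot s_j(p(z,x))^{-1}\cdot\text{(unit)},\,x)\bigr),
\]
where $F$ is the fibre over $1$; the point is that dividing $z$ by an appropriate $k$-th root of $p(z,x)$ lands one back in the fibre $F$, and this is a homeomorphism with continuous inverse because $\varphi$ is unaffected. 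The two descriptions are of course the same: the ambiguity in choosing the branch of the $k$-th root is exactly the $\mathbb Z/k$ (or $S^1$) worth of deck-type freedom, and it is harmless because we are trivialising locally.

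The main obstacle — really the only subtlety — is the presence of the exponent $-k$: for $|k|\ge 2$ the naive "straighten $z^{-k}\varphi(x)$ to $z$" map is not injective, so one cannot trivialise globally, and one must genuinely invoke either the pullback argument or local sections of the covering $z\mapsto z^k$. Once that is handled, everything else is formal: $\varphi$ is a fixed continuous map with no bearing on the bundle structure (it only determines \emph{which} fibre bundle, via the pullback), and continuity/openness of the trivialisations is immediate. I expect the cleanest write-up to be the pullback argument, with perhaps one sentence recording that $\mu_k\colon(z,w)\mapsto z^{-k}w$ is a trivial $S^1$-bundle via the change of coordinates $(z,w)\mapsto(z,z^{-k}w)$.
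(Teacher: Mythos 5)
Your second, ``hands-on'' argument is essentially the paper's own proof: the paper identifies $p^{-1}(w)$ with $p^{-1}(w')$ by choosing a $k$-th root $\xi$ of $w^{-1}w'$ and mapping $(z,x)\mapsto(z\xi,x)$, local triviality then coming from choosing such roots continuously over an arc of the base. Your local sections $s_j$ of the $k$-th power map are exactly this mechanism, and (modulo the unspecified ``unit''/``correction'' factors, which do need to be pinned down so that the second coordinate really lands in $p^{-1}(1)$) that part of your plan is sound. Note that it is here, and only here, that the standing hypothesis $k\ne 0$ enters: $k$-th roots, and local sections of $z\mapsto z^k$, exist only for $k\ne 0$.

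The argument you say you would actually write up --- the ``pullback'' argument --- is not correct as stated. The map $\id\times\varphi\colon S^1\times Y\to S^1\times S^1$ lands in the \emph{total space} of the bundle $\mu_k\colon S^1\times S^1\to S^1$, not in its base, so $p=\mu_k\circ(\id\times\varphi)$ is not a pullback of $\mu_k$ along anything; it is the composition of a bundle projection with an arbitrary map into its total space, and such a composition need not be a fibre bundle (the fibres are the preimages under $\id\times\varphi$ of the fibres of $\mu_k$, and $\varphi$ is just some map realizing the cohomology class, not a fibration). A quick sanity check exposes the gap: your pullback reasoning nowhere uses $k\ne 0$, yet for $k=0$ the map $p(z,x)=\varphi(x)$ is in general not a fibre bundle, since the sets $S^1\times\varphi^{-1}(w)$ can change topological type as $w$ varies. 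So the claim cannot follow from the mere fact that $\mu_k$ is a bundle; the content lies in the local root choices of your second argument (equivalently, in the paper's choice of $\xi$), which you should keep as the actual proof. Your remark that the ``only subtlety'' is non-injectivity of the straightening map for $|k|\ge 2$ also slightly misplaces the issue: the essential hypothesis is $k\ne 0$, not $|k|\ge 2$.
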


Given $w\in S^1$ we have 
\[ p^{-1}(w)= \{ (z,x)\in S^1\times Y\,|\, z^{-k}\varphi(x)=w\}.\]
Let $w,w'\in S^1$. We pick a $k$-th root $\xi$ of $w^{-1}w'$. Then the map $(z,x)\to (z\xi,x)$ defines a homeomorphism $p^{-1}(w)\to p^{-1}(w')$. It is now straightforward to see that $p$ is in fact a fiber bundle.
This concludes the proof of the claim.

It is straightforward to verify that the assumption that $k\ne 0$ implies that the map
\begin{equation} \label{equ:fiber} \begin{array}{rcl} p\colon S^1\times Y&\to& S^1\\
(z,x)&\mapsto & z^{-k}\varphi( x)\end{array}\end{equation}
defines a fiber bundle.

It  follows from the definitions that the map 
\[  S^1\times Y\,\,\cup_{{\Phi_k}} \,\,D^2\times S^1 \times D^n\to S^1\]
which is given by $p$ on the first subset and by projection on the $S^1$-factor in the second subset is the projection of a fiber bundle. 

It remains to identify the fiber of the fibration.
The fiber `on the right' (of the decomposition) is $D^{2}\times \{1\}\times D^n$ whereas the fiber `on the left' is given by 
\[ Y_k=\{ (z,x)\in S^1\times Y\,|\, \varphi(x)=z^{k}\}\]
which is just the $k$-fold cyclic cover of $Y$ corresponding to the epimorphism
$\pi_1(Y)\to H_1(Y;\Z)\xrightarrow{\cong} \Z\to \Z/k$. 

Note that $Y$ is in fact diffeomorphic to the knot exterior $S^{n+2}\sm \nu K$, and that hence $Y_k$ is just the $k$-fold cyclic cover of $S^{n+2}\sm \nu K$. 
It is  straightforward
 to see that the fiber
\[ Y_k\cup_{S^1\times \{1\}\times D^n} D^{2}\times \{1\}\times D^n\]
is the result of attaching a 2-handle to $Y_k$ along the preimage of a meridian under the covering map $Y_k\to Y$. Put differently, the fiber is obtained 
by removing an open ball from the $k$-fold branched cover of $K$.
\end{proof}

We immediately obtain the following corollary.

\begin{corollary}\label{cor:sk1trivial}
If $K\subset S^{n+2}$ is a knot, then $S_{\pm 1}(K)\subset S^{n+3}$ is a trivial knot.
\end{corollary}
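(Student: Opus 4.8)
The plan is to derive Corollary~\ref{cor:sk1trivial} directly from Theorem~\ref{mainthm} by identifying the fiber and the monodromy in the special case $k=\pm 1$. Since the theorem tells us that $S^{n+3}\sm \nu S_{\pm 1}(K)$ is a fiber bundle over $S^1$ whose fiber is the $(\pm 1)$-fold branched cover of $K$ with an open ball removed, I would first observe that the $1$-fold (equivalently, $(-1)$-fold) branched cover of $K$ along $S^{n+2}$ is just $S^{n+2}$ itself, since the branched cyclic covers are taken with respect to the map $\pi_1(S^{n+2}\sm\nu K)\to \Z\to\Z/k$, and for $k=\pm 1$ this quotient is trivial, so the cover is the identity cover and the branched cover is $S^{n+2}$. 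Removing an open ball, the fiber is therefore diffeomorphic to $D^{n+2}$.

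Next I would argue that the total space of a fiber bundle over $S^1$ with fiber $D^{n+2}$, namely the mapping torus of a self-diffeomorphism $h\colon D^{n+2}\to D^{n+2}$, must be diffeomorphic to $S^1\times D^{n+2}$. The key point is that the restriction of the monodromy to the boundary $S^{n+1}$ can be isotoped, and in fact any self-diffeomorphism of $D^{n+2}$ is isotopic to one that is the identity near the boundary (by the Alexander trick, or more carefully in the relevant category, by using that the bundle structure on the boundary $S^{n+1}$-bundle is already pinned down by the construction in the proof of Theorem~\ref{mainthm}, where on the $D^2\times S^1\times D^n$ piece the projection is literally projection to the $S^1$ factor). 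Once the monodromy is the identity near $\partial D^{n+2}$, the Alexander trick shows it is isotopic to the identity rel boundary, so the mapping torus is diffeomorphic to $S^1\times D^{n+2}$. Then $S^{n+3}\sm\nu S_{\pm 1}(K)\cong S^1\times D^{n+2}$, which is precisely the exterior of the trivial knot in $S^{n+3}$; since a knot in $S^{n+3}$ is determined up to isotopy in all three categories by its exterior together with a meridian (and a standard meridian check, using the orientation convention fixed at the start of Section~\ref{section:proofs}, confirms the meridian goes to a meridian), it follows that $S_{\pm 1}(K)$ is the trivial knot.

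I expect the main obstacle to be the passage from "the fiber is $D^{n+2}$ and hence the exterior is $S^1\times D^{n+2}$" to "$S_{\pm 1}(K)$ is the unknot": this requires knowing that the $(n+1)$-sphere knot complement, together with a meridian, determines the knot, which is a form of unknotting via the exterior and is not completely formal (it uses that $S^1\times D^{n+2}$ embeds standardly, and that the gluing of the normal bundle of $S_{\pm 1}(K)$ is the trivial one — again controlled by the explicit $D^2\times S^{n+1}$ piece in the construction). A cleaner route that sidesteps this, and the one I would likely actually write, is to observe directly from the construction that for $k=\pm 1$ the diffeomorphism $\Phi_{\pm 1}$ of $S^1\times D^{n+2}$ carries $S^1\times J$ to a twisted copy, and then build an explicit ambient isotopy of $S^{n+3}$ untwisting $\Phi_{\pm1}$ and straightening $S_{\pm 1}(K)$ to $D^2\times S^{n-1}\cup (S^1\times(\text{trivial disk}))=$ the unknotted $(n+1)$-sphere; but since the hard content is already in Theorem~\ref{mainthm}, the fiber-bundle argument above is the shortest honest path, and I would present that, flagging the Alexander-trick step and the meridian bookkeeping as the only points requiring care.
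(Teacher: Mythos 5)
Your first step is exactly the paper's: for $k=\pm 1$ the branched cover is $S^{n+2}$, so the fiber from Theorem \ref{mainthm} is an $(n+2)$-ball. But you then miss the short way to finish and replace it with a route that has real gaps. The paper simply observes that this fiber is an $(n+2)$-ball whose boundary is (a parallel copy of) $S_{\pm 1}(K)$, so the knot bounds an embedded $(n+2)$-ball in $S^{n+3}$ --- and that is precisely what ``trivial knot'' means here. No statement about the total space of the bundle, the monodromy, or the exterior determining the knot is needed.

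The gaps in your route are concrete. First, ``any self-diffeomorphism of $D^{n+2}$ is isotopic to the identity rel boundary by the Alexander trick'' is a \emph{TOP} statement; in \textit{DIFF} the group $\pi_0\operatorname{Diff}(D^{n+2},\partial)$ is in general nontrivial (it is identified with the group of homotopy $(n+3)$-spheres), so the mapping-torus step does not go through in all the categories the paper works in without further argument. Second, and more seriously, the final step ``the exterior is $S^1\times D^{n+2}$ and the meridian matches, hence the knot is trivial'' is not formal: in higher dimensions the exterior (even with a chosen meridian) does not determine the knot, because regluing $S^{n+1}\times D^2$ can differ by Gluck-type twists of $S^1\times S^{n+1}$ that preserve the meridian class; controlling this is genuinely harder than the corollary itself, and is exactly the kind of ``unknotting via the complement'' statement that is delicate or unknown in some categories and dimensions. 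You flag this obstacle yourself, but your proposed fix (an explicit ambient isotopy untwisting $\Phi_{\pm 1}$) is only sketched, not carried out. The repair is simple: drop the mapping-torus and exterior-determines-knot steps entirely and conclude directly from the fact that the fiber is a ball bounded by the knot.
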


\begin{proof}
First note that  the $\pm 1$-fold branched cover of $S^{n+2}$ along $K$ is just $S^{n+2}$ again. It thus follows  from Theorem \ref{mainthm} that $S_{\pm 1}(K)$ bounds an $(n+2)$-ball in $S^{n+3}$, which means that  $S_{\pm 1}(K)\subset S^{n+3}$ is a trivial knot.
\end{proof}

We also make following observation concerning twist spins.

\begin{lemma}\label{lem:kplus-k}
If $K\subset S^{n+2}$ is a knot, then for any $k\in\Z$ the knot $ S_{k}(K)\cap S^{n+2}$ is isotopic to $K\# -K$.
\end{lemma}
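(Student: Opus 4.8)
The plan is to extract from the definition of the $k$-twist spin exactly how $S_k(K)$ meets the equator $S^{n+2}\subset S^{n+3}$, and to identify the resulting knot by a direct geometric inspection. Recall that we wrote $S^{n+3}=S^1\times D^{n+2}\cup D^2\times S^{n+1}$ and that $S_k(K)=\Phi_k(S^1\times J)\cup D^2\times S^{n-1}$, where $J=K\cap D^{n+2}$ is a disk knot in $D^{n+2}=D^2\times D^n$ and $\Phi_k(z,x)=(z,\rho_{z^k}(x))$. First I would fix a natural copy of the equator sphere $S^{n+2}$ inside this decomposition: take a properly embedded $(n+1)$-ball $A$ in $D^{n+2}$ with boundary the equator $S^n\subset S^{n+1}$, chosen so that $A$ meets $J$ and so that $J$ is symmetric (mirror-symmetric) across $A$, i.e. $A$ splits the disk knot $J$ into a copy of the half-knot "$+K$" and a copy of "$-K$" meeting along the arc $J\cap A$. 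Concretely one takes the equator sphere to be $(\{\pm 1\}\times D^{n+2})\cup(D^2\times S^n)$ for a suitable pair of points $\pm 1\in S^1$, so that $S_k(K)\cap S^{n+2}$ lives inside $(\{1\}\times J)\cup(\{-1\}\times J)\cup(D^2\times S^{n-1})$.

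Next I would compute the intersection term by term. The twisting map $\Phi_k$ acts as $\rho_{z^k}$ on the $D^2$-factor, but at the two points $z=\pm 1$ of $S^1$ the rotation $\rho_{(\pm 1)^k}$ is either the identity or the rotation $\rho_{-1}$ by $\pi$ — in either case it carries $J$ to an embedded disk knot isotopic to $\pm J$ (rotation of the $D^2$-factor by a rigid angle is isotopic to the identity of $D^{n+2}$ rel $J\cap S^{n+1}$, so it does not change the isotopy type of the disk knot, only possibly its co-orientation, which accounts for the $-K$ summand coming from the orientation-reversing identification of the two hemispheres of $S^1$). Thus $\Phi_k(S^1\times J)\cap(\{1\}\times D^{n+2})$ is a disk knot isotopic to $J$ and $\Phi_k(S^1\times J)\cap(\{-1\}\times D^{n+2})$ is a disk knot isotopic to $-J$ (the mirror-reverse), and these two disk knots are glued along their common boundary sphere $\{1\}=\{-1\}$ via the piece $D^2\times S^{n-1}$, which is precisely the connect-sum region. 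Assembling, $S_k(K)\cap S^{n+2}=J\cup(-J)$ glued along a standard boundary, which is by definition $K\#-K$.

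The main technical point — and the step I expect to be the real obstacle — is the orientation bookkeeping: making precise why the two halves contribute $+K$ and $-K$ rather than $+K$ and $+K$, and why the answer is genuinely independent of $k$ even though $\rho_{z^k}$ depends on $k$. The key observation rescuing us is the one already recorded in the setup: the closed curve $z\mapsto\rho_z(x)$ is the \emph{oriented meridian} of $K$, and $\Phi_k$ only re-parametrises the $S^1$-coordinate via $z\mapsto z^k$ on the twisting factor, which is a diffeomorphism of $S^1$ fixing $\pm 1$; hence on the equatorial slices at $z=\pm 1$ the embedded submanifold one sees is insensitive to $k$ up to isotopy. I would phrase this by giving an explicit ambient isotopy of $S^{n+2}$, supported near the two hemispheres, dragging $\Phi_k(S^1\times J)\cap S^{n+2}$ onto $\Phi_0(S^1\times J)\cap S^{n+2}$, thereby reducing to the untwisted case $k=0$, where $S_0(K)\cap S^{n+2}=(\{1\}\times J)\cup(\{-1\}\times J)\cup D^2\times S^{n-1}$ is manifestly the connect sum of $K$ with its reverse mirror image, i.e. $K\#-K$. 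Since this lemma is classical and "well-known" (as the authors note), a clean exposition of this reduction-to-$k=0$ argument, with careful orientation conventions matching those fixed in Section \ref{section:proofs}, is all that is required.
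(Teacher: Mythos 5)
Your argument is essentially the paper's proof: the paper likewise slices at $z=\pm1$, observes that $\Phi_k(S^1\times J)$ meets $\{-1\}\times D^{n+2}$ in the disk knot $J'=\rho_{(-1)^k}(J)$, which is isotopic to $J$ rel boundary (so the answer is independent of $k$), and obtains the $-K$ summand exactly as you do, from the fact that the identification of the $\{-1\}$-hemisphere with $D^{n+2}$ is orientation reversing. One small correction to your bookkeeping: in the decomposition $S^{n+3}=S^1\times D^{n+2}\cup D^2\times S^{n+1}$ the equator sphere is $\{\pm1\}\times D^{n+2}\cup D^1\times S^{n+1}$, so the band joining the two disk knots in $S_k(K)\cap S^{n+2}$ is $D^1\times S^{n-1}$, not $D^2\times S^{n}$ (resp.\ $D^2\times S^{n-1}$).
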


\begin{proof}
We use the notation in the definition of the twist spins of $K$. 
In particular we denote by $J\subset D^{n+2}$ the disk knot
corresponding  to $K$. We denote by $J'$ the string knot which is defined by $\Phi(-1\times J)=-1\times J'$. 
Put differently, $J'$ is  the result of rotating $J\subset D^2\times D^n=D^{n+2}$  by $k\pi$.  Note that $J'$ is isotopic in $D^{n+2}$ to $J$ rel the boundary.
We write 
\[ S^{n+3}=S^1\times  D^{n+2}\,\,\cup \,\, D^2\times S^{n+1}\]
with equator sphere
\[ S^{n+2}=\{ \pm 1\} \times D^{n+2} \,\,\cup \,\, D^1\times S^{n+1}.\]
The above decomposition of $S^{n+3}$ gives rise to an  orientation preserving map 
\[ \Psi\colon S^1\times D^{n+2} \to S^{n+3}\]
such that 
\[ \Psi(S^1\times D^{n+2})\cap S^{n+2} = \{-1\} \times D^{n+2}\,\,\cup \{1\}\times D^{n+2}.\]
Note that the restriction of $\Psi$ to $ \{-1\} \times D^{n+2}$
is \textit{orientation reversing} and that the restriction of $\Psi$ to $ \{1\} \times D^{n+2}$
is \textit{orientation preserving}. In particular
$\Phi_k(S^1\times J)\cap S^{n+2}$ is the union of $J$ with the mirror image of $J'$. 

Since $J$ and $J'$ are isotopic rel the boundary it follows easily that  $S_k(K)\cap S^{n+2}$ is isotopic to the connected sum of $K$ and $-K$.
\end{proof} 

We finally recall that a knot $K\subset S^{n+2}$ is called \textit{doubly slice}
if there exists an unknot $U\subset S^{n+3}$ with $U\cap S^{n+2}=K$.
The following corollary is  an immediate consequence of 
Corollary \ref{cor:sk1trivial} and Lemma \ref{lem:kplus-k}. This consequence was first observed by Sumners \cite[Corollary~2.9]{Su71}.

\begin{corollary}
If $K\subset S^{n+2}$ is an oriented knot, then $K\#-K$ is doubly slice.
\end{corollary}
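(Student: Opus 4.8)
The plan is simply to assemble the two preceding results. Fix $k=1$. By Corollary \ref{cor:sk1trivial} the knot $S_1(K)\subset S^{n+3}$ is trivial, so it is an unknot $U$ in $S^{n+3}$. By Lemma \ref{lem:kplus-k} the cross-section $U\cap S^{n+2}=S_1(K)\cap S^{n+2}$ is isotopic to $K\#-K$. Hence $K\#-K$ is realised, up to isotopy in $S^{n+2}$, as the equatorial slice of an unknot in $S^{n+3}$, which is exactly what doubly slice means.

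The only point that needs a word of care is that the definition of doubly slice asks for an unknot $U$ with $U\cap S^{n+2}$ literally \emph{equal} to $K\#-K$, whereas the above produces only an equality up to isotopy. I would dispose of this with the standard remark that being doubly slice is an isotopy invariant: if $L\subset S^{n+2}$ is doubly slice and $L'\subset S^{n+2}$ is isotopic to $L$, then by the isotopy extension theorem the isotopy carrying $L$ to $L'$ extends to an ambient isotopy of the pair $(S^{n+3},S^{n+2})$, and transporting an unknot $U$ for $L$ along this ambient isotopy yields an unknot for $L'$. Applying this with $L=S_1(K)\cap S^{n+2}$ and $L'=K\#-K$ completes the proof.

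I expect no genuine obstacle: all the real content has already been established, in Theorem \ref{mainthm} (hence Corollary \ref{cor:sk1trivial}) and in Lemma \ref{lem:kplus-k}. If one preferred to avoid invoking isotopy extension at all, one could instead arrange Lemma \ref{lem:kplus-k} to yield an honest identification of $S_1(K)\cap S^{n+2}$ with $K\#-K$ rather than merely an isotopy, but the invariance remark above is the shortest route.
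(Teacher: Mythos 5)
Your proof is correct and follows exactly the route the paper takes: the paper states this corollary as an immediate consequence of Corollary \ref{cor:sk1trivial} and Lemma \ref{lem:kplus-k}, which is precisely your assembly of the two results. Your extra remark handling the discrepancy between ``isotopic to $K\#-K$'' and literal equality via isotopy extension simply makes explicit a point the paper leaves implicit, and is a welcome clarification rather than a deviation.
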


\section{Base points and infinite cyclic covers}\label{section:modules}

In this section we will quickly bring into focus several indeterminacy issues for infinite cyclic covers which often get swept under the carpet.

Let $X$ be a connected topological space with $H_1(X)\cong \Z$, equipped
with  an identification $H_1(X)=\Z$.  We pick a base point $x\in X$. 
We denote by $\wti{X}_{x}\to X$ the infinite cyclic cover corresponding to the canonical epimorphism \[ \phi_{x}\colon \pi_1(X,x)\to H_1(X)=\Z=\ll t\rr.\] 
Note that $\wti{X}_{x}$ has a canonical action by the deck transformation group $\Z=\ll t\rr$.
In particular we can view $H_{i}(\wti{X}_{x})$ as  a module over the group ring of $\Z=\ll t\rr$, i.e. over $\Lambda=\zt$. We henceforth write
\[ H_{i}^{x}(X;\Lambda):=H_{i}(\wti{X}_{x}).\]

The question now arises, whether these homology $\Lambda$-modules depend on the choice of the base point $x$.
If $y$ is a different base point, then we can pick a path $p$ from $x$ to $y$
which then defines an isomorphism $p_*\colon H_{i}^{x}(X;\Lambda)\to H_{i}^{y}(X;\Lambda)$.
We thus see that the isomorphism type of the homology $\Lambda$-modules does not depend on the choice of the base point.
In the following we denote by $H_i(X;\Lambda)$ the isomorphism type of the $\Lambda$-module.

The next question which arises is, to what degree does the isomorphism $p_*$ depend on the choice of the path $p$.
If $q$ is another path from $x$ to $y$, then it is straightforward to see that 
\[ q_*^{-1}\circ p_*\colon H_{i}^{x}(X;\Lambda)\to H_{i}^{x}(X;\Lambda)\]
is multiplication by $t^{\phi_x(\ol{q}p)}$, where $\ol{q}$ is the same path as $q$ but with opposite orientation.

Now let $Y$ be a connected subspace of $X$ which contains the base point $x$ and such that the inclusion induces an isomorphism $H_1(Y;\Z)\cong H_1(X;\Z)$.
We then obtain an induced map of infinite cyclic covers $\wti{Y}_x\to \wti{X}_x$, in particular we obtain for each $i$ an induced map
\[ H_i^x(Y;\Lambda)\to H_i^x(X;\Lambda).\] 

Now we turn to the study of infinite cyclic covers of knots and disk knots. 
Let $K\subset S^{n+2}$ be an oriented knot. 
Note that $H_1(S^{n+2}\sm \nu K;\Z)\cong \Z$ and we identify $H_1(S^{n+2}\sm \nu K;\Z)$ with $\Z$ by identifying the oriented meridian of $K$ with $1$. 

Suppose we are given a decomposition $S^{n+2}=D^{n+2}\cup_{S^{n+1}}\tilde{D}^{n+2}$ as the union of two $(n+2)$-balls
in such a way that 
$K\cap \tilde{D}^{n+2}=0\times \tilde{D}^n$ is the trivial disk knot in $\tilde{D}^2\times \tilde{D}^n=\tilde{D}^{n+2}$.  We write $J:=K\cap D^{n+2}$.
Note that the inclusion induces an isomorphism $H_1(D^{n+2}\sm \nu J;\Z)\to H_1(S^{n+2}\sm \nu K;\Z)$. We use this isomorphism to identify $H_1(D^{n+2}\sm \nu J;\Z)$ with $\Z$. Now we pick a base point $x\in (D^{n+2}\sm \nu J)\cap  S^{n+1}$.
A straightforward Mayer--Vietoris argument shows that for any $i$ the inclusion induces  an isomorphism
\[ H_i^x(D^{n+2}\sm \nu J;\Lambda)\xrightarrow{\cong }H_i^x(S^{n+2}\sm \nu K;\Lambda).\]

\section{Metabolizers for Blanchfield pairings}\label{section:metabs}

Throughout this section we write $\Lambda:=\zt$ and $\Omega:=\Q(t)$. We view these as rings with involution given by $t\mapsto t^{-1}$ extended trivially linearly to the coefficients. 
Throughout this section let $K\subset S^{2m+1}$ be an odd-dimensional knot. As we mentioned in the introduction,  there exists a non-singular $(-1)^{m+1}$-hermitian pairing
\[ \lambda_K\colon H_m(S^{2m+1}\sm \nu K;\Lambda)\times H_m(S^{2m+1}\sm \nu K;\Lambda)\to \Omega/\Lambda,\]
known as the \emph{Blanchfield pairing}. 
We refer to \cite{Bl57} and \cite{Hi12} for details. Recall that a \emph{metabolizer} for the Blanchfield pairing is a $\Lambda$-submodule $P\subset H:=H_m(S^{2m+1}\sm \nu K;\Lambda)$ such that 
\[ P=P^{\perp}:=\{v\in H\,|\, \lambda_K(v,w)=0\mbox{ for all }w\in P\}.\]

Recall that a \emph{slice disk} for the knot $K\subset S^{2m+1}$ is an embedded $2m$-ball $B$ in $D^{2m+2}$ such that $\partial B=K$. A well-known Poincar\'e duality argument shows that the inclusion induced map 
$H_1(S^{2m+1}\sm \nu K;\Z)\to H_1(D^{2m+2}\sm \nu B;\Z)$ is an isomorphism.
Given a base point $x\in S^{2m+1}\sm \nu K$ we can therefore in particular consider the induced map
\[ H_m^x(S^{2m+1}\sm \nu K;\Lambda)\to H_m^x(D^{2m+2}\sm \nu B;\Lambda).\]

The following proposition shows that  a slice disk gives rise to a metabolizer for the Blanchfield pairing of $K$.
We refer to \cite{Ke75} and \cite[Proposition~2.8]{Let00} for the proof. 

\begin{theorem}\label{thm:metab}
Let $B\subset D^{2m+2}$ be a slice disk for $K\subset S^{2m+1}$. Then 
\[ \ker\left(H_m(S^{2m+1}\sm \nu K;\Lambda)\to H_m(D^{2m+2}\sm \nu B;\,\Lambda)\,/\,\mbox{$\Z$-torsion}\right)
\]
is a metabolizer for the Blanchfield pairing $\lambda_K$ of $K$.
\end{theorem}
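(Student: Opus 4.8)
The plan is to follow the classical argument of Kearton~\cite{Ke75} and Letsche~\cite[Proposition~2.8]{Let00}, which I now outline. Write $X:=S^{2m+1}\sm\nu K$ and $W:=D^{2m+2}\sm\nu B$, and let $i\colon X\hookrightarrow W$ be the inclusion; as recalled above $i_*\colon H_1(X;\Z)\to H_1(W;\Z)$ is an isomorphism, so the infinite cyclic covers are compatible and $i$ induces $i_*\colon H_m^x(X;\Lambda)\to H_m^x(W;\Lambda)$ for any base point $x$. Set
\[ P:=\ker\big(H_m(X;\Lambda)\xrightarrow{\,i_*\,}H_m(W;\Lambda)/(\Z\text{-torsion})\big). \]
I use throughout that $H_m(X;\Lambda)$ is a torsion $\Lambda$-module and that $\lambda_K$ is non-singular, i.e.\ the adjoint $\operatorname{ad}_{\lambda_K}\colon H_m(X;\Lambda)\to\ol{\Hom_\Lambda(H_m(X;\Lambda),\Omega/\Lambda)}$ is an isomorphism. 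The claim $P=P^{\perp}$ is then proved by establishing the two inclusions separately.

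First I would prove $P\subseteq P^{\perp}$, the geometric half. Recall the chain-level description of the Blanchfield pairing: for $v,w\in H_m(X;\Lambda)$ represented by cycles in the infinite cyclic cover $\wti X$, torsionness of $H_m(X;\Lambda)$ lets one choose $0\ne p\in\Lambda$ and a chain $c$ with $\partial c=p\cdot v$, and then $\lambda_K(v,w)=\tfrac1p\langle c,w\rangle\in\Omega/\Lambda$, where $\langle c,w\rangle=\sum_j(c\cdot t^jw)\,t^j\in\Lambda$ is the Poincar\'e--Lefschetz intersection pairing computed in the $(2m+1)$-manifold $\wti X$. If $v,w\in P$ then, after clearing $\Z$-torsion, their images in $H_m(W;\Lambda)$ vanish, so for some positive integer $N$ there are chains $C,C'$ in $\wti W$ with $\partial C=Nv$ and $\partial C'=Nw$; since $\dim\wti W=2m+2$ exceeds the degrees involved, the intersection picture that computes $\langle c,w\rangle$ bounds inside $\wti W$. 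More invariantly, naturality of Poincar\'e--Lefschetz duality with respect to $i$ identifies $\lambda_K$ on $P\times P$ with a pairing that factors through $H_m(W;\Lambda)/(\Z\text{-torsion})$ on both arguments, hence vanishes. This is exactly where the $\Z$-torsion hypothesis is genuinely needed, since one only controls $\operatorname{im}(i_*)$ modulo $\Z$-torsion.

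Second I would prove $P^{\perp}\subseteq P$, the duality-counting half. The key input is a self-duality of the long exact sequence of the pair $(W,X)$ with $\Lambda$-coefficients: combining it with Poincar\'e--Lefschetz duality for $W$ (which has boundary $\partial W=X\cup_{\partial X}(B\times S^1)$, with $B\times S^1\cong D^{2m}\times S^1$), with the long exact sequences of the relevant pieces of $\partial W$, and with the universal coefficient spectral sequence over $\Lambda=\Z[t^{\pm 1}]$ — in which the $\Ext^0$/$\Ext^1$ terms produce the $\Omega/\Lambda$-valued duality (after one Bockstein) while the $\Ext^2$ terms produce precisely the $\Z$-torsion that has been quotiented out — one shows that $\operatorname{im}\big(i_*\colon H_m(X;\Lambda)\to H_m(W;\Lambda)/(\Z\text{-torsion})\big)$ equals its own annihilator under the Poincar\'e--Lefschetz linking pairing of $W$. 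Transporting this along $\operatorname{ad}_{\lambda_K}$ and the commutative square from the first step converts ``$\operatorname{im}(i_*)$ is self-annihilating'' into an isomorphism $H_m(X;\Lambda)/P\cong\ol{\Hom_\Lambda(P,\Omega/\Lambda)}$; together with the already-established $P\subseteq P^{\perp}$ and non-singularity of $\lambda_K$ (which likewise embeds $H_m(X;\Lambda)/P^{\perp}$ into $\ol{\Hom_\Lambda(P,\Omega/\Lambda)}$), a comparison of $\Lambda$-module lengths, equivalently of order ideals, forces $P=P^{\perp}$.

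The main obstacle is the homological bookkeeping in this last step. Over the two-dimensional ring $\Z[t^{\pm 1}]$ neither universal coefficients nor Poincar\'e duality is as clean as over a PID, so one must carefully separate the $\Ext^1$-contribution, which carries the Blanchfield form, from the $\Ext^2$-contribution, which is the $\Z$-torsion, and verify that, after quotienting by $\Z$-torsion, the long exact sequence of $(W,X)$ really is self-dual with the expected shift. One must also be careful — as the authors emphasise in the introduction — about base points when writing down the intersection-theoretic formula for $\lambda_K$ on $\wti X$ and comparing it with the corresponding picture in $\wti W$. Since the result is classical I would present the structure above and then refer to \cite{Ke75} and \cite[Proposition~2.8]{Let00} for the detailed verification.
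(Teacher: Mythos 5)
The paper does not actually prove Theorem \ref{thm:metab}: it states the result and refers to \cite{Ke75} and \cite[Proposition~2.8]{Let00}, so there is no in-paper argument to compare against. Your outline is the standard Kearton--Letsche argument (the geometric inclusion $P\subseteq P^{\perp}$ via bounding in the slice-disk exterior, then duality for the pair plus an order-ideal/length count for the reverse inclusion) and, like the paper, you defer the detailed verification to the same references, so your proposal matches the paper's treatment.
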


Henceforth we consider  the $k$-twist spin $S_k(K)$ of the knot $K\subset S^{2m+1}$.
Recall  that we can then write $S^{2m+1}=D^{2m+1}\cup_{S^{2m}}\tilde{D}^{2m+1}$ as the union of two $(2m+1)$-balls
in such a way that 
$K\cap \tilde{D}^{2m+1}=0\times \tilde{D}^{2m-1}$ is the trivial disk knot in $\tilde{D}^2\times \tilde{D}^{2m-1}=\tilde{D}^{2m+1}$.  We write $J:=K\cap D^{2m+1}$, the other disk knot in the decomposition.
Recall that the $k$-twist spin $S_k(K)$ of the knot $K\subset S^{2m+1}$ is then defined as 
\[ S_k(K):=
\underset{\subset S^1\times D^{2m+1}}{\underbrace{ \Phi_k(S^1\times J)}}\,\,\cup\,\,
\underset{\subset D^2\times S^{2m}}{\underbrace{ 
D^2\times  S^{2m-2}}}.\]
Now we write
\[ \begin{array}{rcl} S_+^1&=&\{ z\in S^1\,|\, \im(z)\geq 0\}, \\
 D_+^{2m+2}&=& S^1_+\times D^{2m+1}\cup \{ z\in D^2\,|\, \im(z)\geq 0\}\times S^{2m}.\end{array}\]
We similarly define $S_-^1$ and $D_-^{2m+2}$. Note that $D_-^{2m+2}\cup D_+^{2m+2}=S^{2m+2}$
and that  $D_-^{2m+2}\cap D_+^{2m+2}=S^{2m+1}$.
Also note that $B_+:=S_k(K)\cap D_+^{2m+2}$ and $B_-:=S_k(K)\cap D_-^{2m+2}$ are slice disks for 
\[ L:= S_{k}(K)\cap S^{2m+1}.\]
As we have seen in Theorem \ref{thm:metab}, the slice disks $B_+$ and $B_-$ give rise to metabolizers for the Blanchfield pairing  of $L=S_k(K)\cap S^{2m+1}$. We start out with the following lemma.

\begin{lemma}\label{lem:torsion-free}
Let $K\subset S^{2m+1}$ be an oriented knot. We write $H=H_m(S^{2m+1}\sm  K;\Lambda)$.
We furthermore write $L=S_{k}(K)\cap S^{2m+1}$,  $B_+:=S_k(K)\cap D_+^{2m+2}$ and $B_-:=S_k(K)\cap D_-^{2m+2}$. Then the modules  $H_m(D^{2m+2}_-\sm \nu B_-;\Lambda)$ and $H_m(D^{2m+2}_+\sm \nu B_+;\Lambda)$ are $\Z$-torsion free.
\end{lemma}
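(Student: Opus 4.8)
The plan is to identify the complements $D^{2m+2}_\pm\sm\nu B_\pm$ explicitly, using the decomposition of $S^{n+3}\sm\nu S_k(K)$ obtained in the proof of Theorem \ref{mainthm}, and then show that each is homotopy equivalent to the exterior of the disk knot $J$ (equivalently, to the knot exterior $S^{2m+1}\sm\nu K$). Once that is done, $\Z$-torsion-freeness of $H_m$ of the infinite cyclic cover follows from the corresponding well-known fact for knot exteriors. More precisely, recall from Section \ref{section:proofs} that
\[ S^{2m+2}\sm\nu S_k(K)\;\cong\;\bigl(S^1\times Y\bigr)\;\cup_{\Phi_k}\;\bigl(D^2\times S^1\times D^{2m-1}\bigr),\]
where $Y=D^{2m+1}\sm\nu J\simeq S^{2m+1}\sm\nu K$. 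First I would intersect this decomposition with $D^{2m+2}_\pm$; since $D^{2m+2}_\pm$ is cut out by the condition $\mathrm{im}(z)\gtrless 0$ on the $S^1$- and $D^2$-coordinates, the first piece $S^1\times Y$ meets $D^{2m+2}_\pm$ in $S^1_\pm\times Y$ and the second piece meets it in $\{z\in D^2:\mathrm{im}(z)\gtrless 0\}\times S^1\times D^{2m-1}$. Thus $D^{2m+2}_\pm\sm\nu B_\pm\cong (S^1_\pm\times Y)\cup_{\Phi_k}(D^2_\pm\times S^1\times D^{2m-1})$.

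Next I would contract. The half-disk $S^1_\pm$ deformation-retracts onto a point, so $S^1_\pm\times Y\simeq Y$; the twisting diffeomorphism $\Phi_k$, restricted to $S^1_\pm\times S^1\times D^{2m-1}$, is isotopic to the identity because $S^1_\pm$ is contractible (the twist $z\mapsto\rho_{z^k}$ can be unwound continuously as $z$ runs over the half-circle starting from an endpoint). Hence the gluing is, up to homotopy, the untwisted one, and $D^{2m+2}_\pm\sm\nu B_\pm$ is homotopy equivalent to $Y\cup_{S^1\times D^{2m-1}}(D^2_\pm\times S^1\times D^{2m-1})$. Since $D^2_\pm$ is contractible, the attached piece $D^2_\pm\times S^1\times D^{2m-1}\simeq S^1\times D^{2m-1}$ collapses onto its intersection with $Y$, namely a collar neighbourhood of $S^1\times D^{2m-1}\subset\partial Y$; so the whole space deformation-retracts onto $Y\simeq S^{2m+1}\sm\nu K$. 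Throughout, the identification $H_1(-;\Z)\cong\Z$ and the choice of basepoint must be tracked as in Section \ref{section:modules}, so that the homotopy equivalence is compatible with the infinite cyclic covers; this gives an isomorphism $H_m(D^{2m+2}_\pm\sm\nu B_\pm;\Lambda)\cong H_m(S^{2m+1}\sm\nu K;\Lambda)=H$.

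Finally, it is classical that for a knot $K\subset S^{2m+1}$ the Alexander module $H=H_m(S^{2m+1}\sm\nu K;\Lambda)$ is a finitely generated $\Lambda$-torsion module with $t-1$ acting invertibly (the Alexander polynomial satisfies $\Delta_K(1)=\pm1$); since $\Z$-torsion in a finitely generated $\Lambda$-module is supported on primes, and a module on which $t-1$ acts invertibly can have no submodule annihilated by an integer unless that integer is a unit, $H$ is $\Z$-torsion free. Transporting this along the isomorphism above proves the lemma. The main obstacle I anticipate is not the topology of the collapse but the bookkeeping: verifying that the homotopy equivalences respect the infinite cyclic covering structure requires care in choosing compatible basepoints and checking that $\Phi_k$ restricted to the half-circle piece is equivariantly (not just homotopically) trivial, precisely the sort of basepoint subtlety flagged in the paragraph preceding the lemma. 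An alternative, perhaps cleaner, route would be a direct Mayer--Vietoris computation with $\Lambda$-coefficients on the two pieces $S^1_\pm\times Y$ and $D^2_\pm\times S^1\times D^{2m-1}$, noting that $H_*(S^1_\pm\times\widetilde Y)\cong H_*(\widetilde Y)$ and that the second piece has $\Lambda$-homology concentrated so as to contribute nothing in degree $m$, again reducing $H_m(D^{2m+2}_\pm\sm\nu B_\pm;\Lambda)$ to $H$.
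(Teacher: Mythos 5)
Your topological reduction is exactly the paper's: the paper's proof consists of observing that the inclusion-induced maps
\[ S^{2m+1}\sm \nu K\leftarrow D^{2m+1}\sm \nu J \to (D^{2m+2}_\pm\sm \nu B_\pm)\cap (S_\pm^1\times D^{2m+1})\to D^{2m+2}_\pm\sm \nu B_\pm\]
are homotopy equivalences, so that $H_m(D^{2m+2}_\pm\sm\nu B_\pm;\Lambda)\cong H_m(S^{2m+1}\sm\nu K;\Lambda)$; your unwinding of the half-circle pieces $(S^1_\pm\times Y)\cup_{\Phi_k}(D^2_\pm\times S^1\times D^{2m-1})$ is just a more explicit verification of this and is fine.

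The gap is in your final algebraic step. The implication ``$H$ is a finitely generated $\Lambda$-torsion module on which $t-1$ acts invertibly, hence $H$ has no $\Z$-torsion'' is false. For example, $M=\Lambda/(3,\,t-2)$ is a finitely generated $\Lambda$-torsion module on which $t-1$ acts as the identity, yet $3M=0$; such modules are not exotic in knot theory --- they occur as Alexander modules away from the middle dimension (e.g.\ the exterior of the $2$-twist spin of the trefoil has $H_1(\,\cdot\,;\Lambda)\cong\Z/3$ with $t$ acting by $-1$). So the condition $\Delta_K(1)=\pm1$ (equivalently, $t-1$ acting as an automorphism) cannot by itself rule out $\Z$-torsion. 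The fact you actually need --- that the \emph{middle-dimensional} module $H_m(S^{2m+1}\sm\nu K;\Lambda)$ of an odd-dimensional knot is $\Z$-torsion free --- is a genuine theorem of Levine, proved using duality in the infinite cyclic cover, and this is precisely what the paper invokes by citing \cite{Lev77}. To complete your argument you should quote that result (or reprove it, e.g.\ via duality or a suitable Seifert surface presentation) rather than derive it from the invertibility of $t-1$.
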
 

\begin{proof}
It follows easily from the definitions that the inclusion induced maps
\[ S^{2m+1}\sm \nu K\leftarrow D^{2m+1}\sm \nu J \to (D^{2m+2}_\pm\sm \nu B_\pm)\cap S_\pm^1\times D^{2m+1}\to D^{2m+2}_\pm\sm \nu B_\pm\]
 are homotopy equivalences. It follows that 
 the modules  $H_m(D^{2m+2}_-\sm \nu B_\pm;\Lambda)$ are isomorphic to 
 $H_m(S^{2m+1}\sm \nu K;\Lambda)$ of $K$, which is well-known (see \cite{Lev77}) to be  $\Z$-torsion free.
\end{proof}

The combination of Theorem \ref{thm:metab} with Lemma  \ref{lem:torsion-free} shows that 
\[ P_{\pm}:=\ker\left\{ H_m(S^{2m+1}\sm \nu L;\Lambda) \to  H_m(D^{2m+2}_\pm\sm \nu  B_\pm;\Lambda)\right\}\]
are metabolizers of the Blanchfield form of $L=S_k(K)\cap S^{2m+1}$.
We write $H=H_m(S^{2m+1}\sm \nu K;\Lambda)$. Recall that $L$ is isotopic to $K\#-K$. It is well-known that 
there exists an isomorphism
\begin{equation} \label{equ:iso} H\oplus H\xrightarrow{\cong}H_m(S^{2m+1}\sm \nu (K\#-K);\Lambda)\xrightarrow{\cong} H_m(S^{2m+1}\sm \nu L;\Lambda).\end{equation}
It is therefore tempting to write down $P_{\pm}$ as submodules of $H\oplus H$. 
But this undertaking is fraught with difficulties since the isomorphism in (\ref{equ:iso}) is not canonical and depends on various choices of base points and connecting paths. In the following we will carefully pick an isomorphism as in (\ref{equ:iso}) and then describe the submodules of $H\oplus H$ corresponding to $P_\pm$.

The discussion now naturally breaks up into two cases, either $k$ is even, in which case $S_k(K)\cap (-1\times D^{2m+1})=-1\times J$, or $k$ is odd, in which case $S_k(K)\cap (-1\times D^{2m+1})=-1\times \rho_{-1}(J)$.
The two subsequent  Theorems \ref{thm:metabseven} and \ref{thm:metabsodd}
are  the promised more precise version of Theorem \ref{thm:metabsintro}.

\begin{theorem}\label{thm:metabseven}
Let $K\subset S^{2m+1}$ be an oriented knot and let $k\in \Z$ be even. We define $J,L,B_+$ and $B_-$ as above. Let $x\in (D^{2m+1}\sm \nu J)\cap S^{2m+1}$ be a base point. 
We write $H=H_m^x(S^{2m+1}\sm \nu K;\Lambda)$. We  denote by $\Phi$ the map
\[  H\xleftarrow{\cong} H_m^x(D^{2m+1}\sm \nu J;\Lambda)
\xrightarrow{\cong} H_m^{1\times x}(1\times (D^{2m+1}\sm \nu J);\Lambda)\to H_m^{1\times x}(S^{2m+1}\sm \nu L;\Lambda),\]
where the left and right maps are induced by inclusions and where the middle map is the obvious isomorphism. 
 We pick a path $\g$ in $D^1\times S^{2m}\subset S^{2m+1}=\pm 1\times D^{2m+1}\cup D^1\times S^{2m}$ from $-1\times x$ to $1\times x$. We denote by $\Psi$ the map
\[ \begin{array}{rcl} H\xleftarrow{\cong} H_m^x(D^{2m+1}\sm \nu J;\Lambda)&\xrightarrow{\cong} &
H_m^{-1\times x}(-1\times (D^{2m+1}\sm \nu J);\Lambda)\\
&\to& H_m^{-1\times x}(S^{2m+1}\sm \nu L;\Lambda) \xrightarrow{\g_*} H_m^{1\times x}(S^{2m+1}\sm \nu L;\Lambda),\end{array}\]
where the first and the third map are induced by inclusions, the second map is the obvious isomorphism
and the fourth map is induced by the change of base point using the path $\g$.
 Then $\Phi\oplus \Psi$ induces an isomorphism of pairings
\[ \lambda_K\oplus -\lambda_K\to \lambda_L\]
such that the two metabolizers arising from twist spinning
\[\begin{array}{rcl}&& \ker\left( H\oplus H\xrightarrow{\Phi\oplus \Psi} H_m^{1\times x}(S^{2m+1}\sm \nu L;\Lambda)\to H_m^{1\times x}(D^{2m+2}_-\sm \nu  B_-;\Lambda)\right)\\[2mm]
\text{and}&\,& \ker\left( H\oplus H\xrightarrow{\Phi\oplus \Psi} H_m^{1\times x}(S^{2m+1}\sm \nu L;\Lambda)\to H_m^{1\times x}(D^{2m+2}_+\sm \nu  B_+;\Lambda)\right)\end{array}\]are respectively equal to
\[\{ v\oplus -t^{\frac{k}{2}}v\,|\, v\in H\} \quad\text{and}\quad\{ t^{\frac{k}{2}}v\oplus -v\,|\, v\in H\}.\]
\end{theorem}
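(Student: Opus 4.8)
The plan is to reduce everything to the explicit handle picture from the proof of Theorem~\ref{mainthm} and to track base points and connecting paths with care, as the surrounding discussion demands. First I would fix the decomposition $S^{2m+1}=D^{2m+1}\cup_{S^{2m}}\tilde D^{2m+1}$ with $J=K\cap D^{2m+1}$, set $Y=D^{2m+1}\sm\nu J$, and record the homotopy equivalences $S^{2m+1}\sm\nu K\simeq Y\simeq D_\pm^{2m+2}\sm\nu B_\pm$ already established in Lemma~\ref{lem:torsion-free}. Because $k$ is even, $\Phi_k$ restricts to the identity on $\{\pm1\}\times D^{2m+1}$, so $B_\pm$ meets both $\{1\}\times D^{2m+1}$ and $\{-1\}\times D^{2m+1}$ in a copy of $J$. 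The key point is to understand the inclusion $S^{2m+1}\sm\nu L\hookrightarrow D_\pm^{2m+2}\sm\nu B_\pm$ on $H_m(-;\Lambda)$: I claim that after identifying source and target with two copies of $H$ via $\Phi$ and $\Psi$, this inclusion is, on one summand, an isomorphism (up to a power of $t$ coming from the connecting path), and on the other summand the zero map, because that summand is carried into the part of $D_\pm^{2m+2}$ that deformation retracts onto the $\{\pm1\}$-end and is killed there.

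The steps, in order: (1) Write $S^{2m+1}\sm\nu L$ via Mayer--Vietoris as $(1\times Y)\cup(-1\times Y)$ glued along $D^1\times(S^{2m}\sm\nu S^{2m-2})=D^1\times S^1\times D^{2m-1}$, and use that the gluing region is homotopy equivalent to $S^1$ (with $H_1$ generated by the meridian) to get the splitting $H_m(S^{2m+1}\sm\nu L;\Lambda)\cong H\oplus H$; check this agrees with $\Phi\oplus\Psi$ where $\Phi$ uses the $\{1\}$-copy directly and $\Psi$ uses the $\{-1\}$-copy transported along $\g$. (2) Identify $D_+^{2m+2}\sm\nu B_+$: it is $(S_+^1\times Y)\cup(\text{half-disk}\times S^1\times D^{2m-1})$, which deformation retracts onto $1\times Y$ (push the $S^1_+$ parameter to the endpoint $z=1$, say), so the inclusion of the $\{1\}$-copy of $Y$ is a homotopy equivalence while the $\{-1\}$-copy is pulled through the arc and becomes null-homotopic into $1\times Y$ — but one must keep track of the meridian winding, which is exactly where the twisting $\rho_{z^k}$ enters and produces the factor $t^{k/2}$: travelling from $z=-1$ to $z=1$ through the upper half circle composes with $k/2$ full twists around $J$, i.e.\ multiplication by $t^{k/2}$ on $H_m(\wti Y;\Lambda)$. (3) Conclude $\ker(\Phi\oplus\Psi$ then project to $D_+^{2m+2})=\{t^{k/2}v\oplus -v\}$, and symmetrically, using the lower half circle $D_-^{2m+2}$ which retracts onto $-1\times Y$ and transports the $\{1\}$-copy with $k/2$ twists in the opposite rotational sense, $\ker(\cdots\to D_-^{2m+2})=\{v\oplus -t^{k/2}v\}$. (4) For the statement about forms, invoke Theorem~\ref{thm:metab} to know both kernels are metabolizers; since $\{t^{k/2}v\oplus -v\}$ and $\{v\oplus -t^{k/2}v\}$ are metabolizers of $\lambda_K\oplus(-\lambda_K)$ under the stated identification precisely when the identification carries the form correctly, I would verify the form statement either by a direct Mayer--Vietoris computation of $\lambda_L$ on the two $Y$-summands (the cross terms vanish because the summands have disjoint supports away from the meridional gluing annulus, and the self-terms are $\lambda_K$ and $-\lambda_K$, the sign from the orientation-reversal recorded in the proof of Lemma~\ref{lem:kplus-k}), or by noting that $\{v\oplus-v\}$ is a metabolizer of $\lambda_K\oplus-\lambda_K$ and that an isomorphism $H\oplus H\to H_m(S^{2m+1}\sm\nu L;\Lambda)$ sending one known metabolizer to a known metabolizer, together with the nonsingularity of both forms, pins down enough of the form to finish.

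**Main obstacle.** The routine part is the homotopy equivalences and the Mayer--Vietoris bookkeeping; the delicate part — and the reason the authors warn the rigorous statement is surprisingly hard — is getting the powers of $t$ exactly right, i.e.\ correctly accounting for the interaction between (a) the choice of connecting path $\g$ in $D^1\times S^{2m}$, (b) the deck-transformation ambiguity described in Section~\ref{section:modules} (changing a path changes the isomorphism by $t^{\phi_x(\bar q p)}$), and (c) the $k/2$ twists that $\Phi_k$ contributes when one slides along $S^1_\pm$ from the $\{-1\}$-end to the $\{1\}$-end. I would handle this by computing, once and explicitly, the image under the covering-space functor of the loop obtained by concatenating $\g$ with the arc $z\mapsto(z,x)$, $z\in S^1_+$, and reading off its class in $H_1=\langle t\rangle$; the asymmetry between $B_+$ (upper arc) and $B_-$ (lower arc) is what produces the asymmetry $t^{k/2}v\oplus-v$ versus $v\oplus-t^{k/2}v$ in the two kernels. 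I expect the sign conventions on the meridian (fixed in Section~\ref{section:proofs} so that $z\mapsto\rho_z(x)$ is the \emph{oriented} meridian) to be exactly what makes the exponent $+k/2$ rather than $-k/2$, and I would double-check this against the $k=0$ case, where both kernels must reduce to $\{v\oplus-v\}$, the standard metabolizer for $K\#-K$.
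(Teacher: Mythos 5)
Your overall route is essentially the paper's: the published proof also decomposes the exterior of $L$ and of the half-complements via Mayer--Vietoris over the two semicircles, identifies $H_m(\wti{D_\pm^{2m+2}\sm \nu B_\pm})$ with the homology of the fibre over one endpoint, transports the other endpoint along the semicircle by a lifted rotation homotopy $\wti{h}_s$ while counting deck transformations, and finally compares the resulting maps with $\Phi\oplus\Psi$ by computing the class of an explicit loop (the rotation arc concatenated with $\g$) in $H_1=\ll t\rr$, which produces the $t^{k/2}$. So your steps (1)--(3) and the explicit-loop plan in your last paragraph are the right mechanism. However, two points as written are genuine problems. First, your opening claim that the inclusion-induced map is ``on the other summand the zero map, because that summand \dots is killed there'' is false, and it contradicts your own steps (2)--(3): the composite $H\xrightarrow{\Psi}H_m^{1\times x}(S^{2m+1}\sm\nu L;\Lambda)\to H_m^{1\times x}(D_+^{2m+2}\sm\nu B_+;\Lambda)$ is an isomorphism up to a power of $t$ (the $-1\times Y$ copy is carried \emph{onto} the $1\times Y$ end by the semicircular transport, not annihilated); were it zero, the kernel would be $0\oplus H$ rather than a twisted antidiagonal. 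Second, the quantitative heart of the theorem --- that the upper- and lower-semicircle transports, measured against the $\g$-based identification $\Psi$, are multiplication by exactly $t^{\pm k/2}$ --- is only asserted. The paper gets this in two stages: the full monodromy $\wti{h}_{2\pi}$ is multiplication by $t^{-k}$, read off from the fibration constructed in the proof of Theorem \ref{mainthm}, and then the comparison of basings via the path $\alpha(s)=(e^{\pi i s},\rho_{e^{\pi i k s}}(x))$ concatenated with $\g$ contributes $t^{k/2}$. Your proposed sanity check at $k=0$ cannot settle the sign of the exponent, since $t^{k/2}$ and $t^{-k/2}$ coincide there; you must actually carry out the loop computation with the meridian convention of Section \ref{section:proofs}.

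On the isometry statement, your first suggestion is essentially what the paper does: cross terms vanish because representative cycles from the two $Y$-copies can be separated in the infinite cyclic cover, and the self term on the $-1$ side acquires a sign because the restriction of the equatorial embedding to $-1\times D^{2m+1}$ is orientation reversing (the paper phrases this via the isotopy $h_\pi$ and the identity $\lambda_{J'}(h_\pi v,h_\pi w)=-\lambda_J(v,w)$, with the conventions of Lemma \ref{lem:kplus-k}). Your fallback argument, however, should be discarded: knowing that an isomorphism carries one metabolizer to a metabolizer, even together with nonsingularity of both pairings, does not determine the pairing, so it cannot substitute for the direct computation.
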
 

\begin{proof}
We write $X:=S^{2m+1}\sm \nu K$ and we denote by $c\colon \what{X}\to X$ the infinite cyclic covering
of $X$ corresponding to the base point $x$ and corresponding to the kernel of the epimorphism
$\pi_1(X,x)\to H_1(X;\Z)\xrightarrow{\cong} \langle t\rangle$ which sends an oriented meridian of $K$ to $t$.
Given any subset $U$ of $X$ we henceforth write $\what{U}=c^{-1}(U)$.

Now we write $Y:=D^{2m+1}\sm \nu J$. As discussed in Section \ref{section:modules} the inclusion induces an
isomorphism
\[ H_m(\what{Y})\to H_m(\what{X})=H_m^x(X;\Lambda)=:H\]
This allows us to make the identification $H=H_m(\what{Y})$.

We write $W:=S^{2m+2}\sm S_k(K)$ where we again decompose $S^{2m+2}$ as $S^1\times D^{2m+1}\cup D^2\times S^{2m}$.
We equip $W$ with the base point $1\times x\in S^1\times D^{2m+1}$. We denote by $c\colon \wti{W}\to W$ the infinite cyclic covering
of $W$ corresponding to t the epimorphism
$\pi_1(W,1\times x)\to H_1(W;\Z)\xrightarrow{\cong} \langle t\rangle$ which sends an oriented meridian of $K$ in $D^{2m+1}$ to $t$.
Throughout the proof we think of $\wti{W}$ as equivalence classes of paths emanating from the base point $1\times x$.
As before, given any subset $U$ of $W$ we henceforth write $\wti{U}=c^{-1}(U)$.
Note that with our conventions we have a canonical homeomorphism $\what{Y}\to \wti{Y}$. 
We will henceforth make the identification $H=H_m(\wti{Y})$. 

We write $W_0:=W\cap S^1\times D^{2m+1}$ and consider the map
\[ q\colon \wti{W}_0\to W_0\to S^1\times D^{2m+1}\to S^1\]
where the last map is just projection onto the first factor. 
Note that $q^{-1}(1)=\wti{Y}$. We refer to the figure below for an illustration.

Now we consider the homotopy
\[ \begin{array}{rcl} h_s\colon W_0&\to &W_0 \\
(z,p)&\mapsto &(e^{ is}z,\rho_{e^{ isk}}(p))\end{array}\]
with parameter $s\in \R$. Note that this homotopy lifts to a homotopy
\[ \wti{h}_s\colon \wti{W}_0\to \wti{W}_0 \]
with parameter $s\in \R$. In fact this lifting can be described very explicitly: given a  path $\alpha$ from the base point $1\times x$ to a point $(z,p)$ in $W_0$ we consider the path
\[ \begin{array}{rcl}\beta\colon [0,s] &\mapsto & W_0 \\
r&\mapsto& (ze^{ ir},\rho_{e^{ ir}}(p)).\end{array}\]
We then have $\wti{h}_s([\alpha])=[\beta\alpha]$. 
Note that for any $r,s\in \R$ we have $\wti{h_{r+s}}=\wti{h_r}\circ \wti{h_s}$. 

\begin{figure}[h]
\labellist
\small\hair 2pt
\pinlabel $\widetilde{W_0}$ [l] at 93 141
\pinlabel \textcolor{red}{$\widetilde{D^{2m+1}\sm \nu J}=q^{-1}(1)$} [l] at 117 128
\pinlabel $q$ [l] at 147 103
\pinlabel $S^1$ [l] at 271 119
\pinlabel \textcolor{blue}{$\widetilde{h_s}$} [l] at -13 63
\pinlabel $c$ [l] at 88 53
\pinlabel $W_0$ [l] at 208 41
\pinlabel \textcolor{blue}{$h_s$} [l] at 106 4
\pinlabel \textcolor{red}{$D^{2m+1}\sm \nu J$} [l] at 198 6
\endlabellist
\begin{center}
\includegraphics{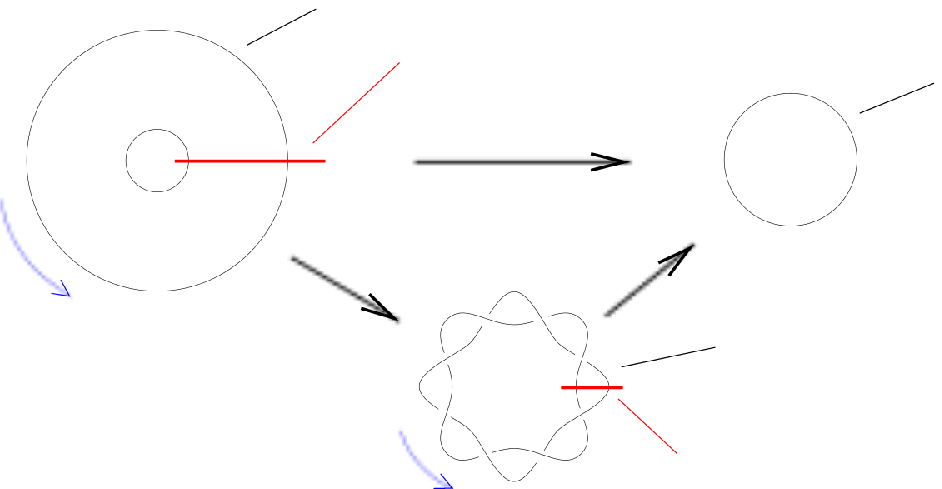}
\end{center}
\label{fig:maps}
\end{figure}

Also note that for any $z\in S^1$ and $s\in \R$ the map $h_{s}$ induces an isomorphism
\[ {\wti{h}_{s}}\colon H_m(q^{-1}(z))\to H_m(q^{-1}(e^{ is}z)).\]
Furthermore, for any interval $I$ in $S^1$ with end points $z,z'=ze^{ is}$, $s\in (0,2\pi)$,  the inclusion maps induce  isomorphisms
\[ \i_z\colon H_m(q^{-1}(z))\to H_m(q^{-1}(I)) \mbox{ and }
\i_{z'}\colon H_m(q^{-1}(z'))\to H_m(q^{-1}(I))\]
such that 
\begin{equation} \label{equ:iab} \i_{z'}^{-1}\circ \i_z={\wti{h}_{s}}.\end{equation}
We can now formulate the following claim.

\begin{claim}
We denote by $f$ the map
\[ H=H_m(q^{-1}(1))\to  H_m(\wti{S^{2m+1}\sm \nu L})\]
and we denote by $g$ the map
\[  H=H_m(q^{-1}(1))\xrightarrow{\wti{h_\pi}} H_m(q^{-1}(-1))\to H_m(\wti{S^{2m+1}\sm \nu L}.)\]
Then $f\oplus g$ induces an isomorphism of pairings
\[ \l_K\oplus -\l_K\xrightarrow{\cong} \l_L\]
such that
\[\begin{array}{rcl}
\ker\left(H\oplus H\xrightarrow{f\oplus g} H_m(S^{2m+1}\sm \nu L;\Lambda)\to H_m(D^{2m+2}_+\sm \nu B_+;\Lambda)\right)
&=&\{ v\oplus -v\,|\, v\in H\}, \mbox{ and }\\[2mm]
\ker\left(H\oplus H\xrightarrow{f\oplus g} H_m(S^{2m+1}\sm \nu L;\Lambda)\to H_m(D^{2m+2}_-\sm \nu B_-;\Lambda)\right)
&=&\{ v\oplus -t^kv\,|\, v\in H\}.\end{array}\]
\end{claim} 
\medskip

In order to prove the claim we  first consider the following commutative diagrams of inclusion induced maps:
\[ \xymatrix{ H_m(q^{-1}(1))\oplus  H_m(q^{-1}(-1))\ar[r]\ar[d] & H_m(q^{-1}(S_{+}^1))=H_m(\wti{W_0\cap D_+^{2m+2}})\ar[d]\\
H_m(\wti{S^{2m+1}\sm \nu L}) \ar[r] & H_m(\wti{D^{2m+2}\sm \nu B_+})=H_m(\wti{W\cap D_+^{2m+2}}).}\]
Using a Mayer-Vietoris argument it is straightforward to see that the vertical  maps are isomorphisms.
It follows from the above commutative diagram and from the definitions that 
\[ 
\begin{array}{rcl}
&&\ker\left(H\oplus H\xrightarrow{f\oplus g} H_m(S^{2m+1}\sm \nu L;\Lambda)\to H_m(D^{2m+2}_+\sm \nu B_+;\Lambda)\right)\\[2mm]
&=&\ker\left(\i_1\oplus (\i_{-1}\circ \wti{h_{\pi}})\colon H_m(q^{-1}(1))\oplus H_m(q^{-1}(1))\to H_m(q^{-1}(S_+^1))\right).\end{array}\]
By (\ref{equ:iab}) we have $\wti{h}_\pi=\i_{-1}^{-1}\circ \i_1$. It thus follows that the above kernel equals 
\[ \ker\left( \i_1\oplus \i_1\colon H\oplus H\to H_m(q^{-1}(S_+^1))\right)=\{ v\oplus -v\,|\, v\in H\}.\]

Essentially the same argument shows that 
\[ 
\begin{array}{rcl}
&&\ker\left(H\oplus H\xrightarrow{f\oplus g} H_m(S^{2m+1}\sm \nu L;\Lambda)\to H_m(D^{2m+2}_-\sm \nu B_-;\Lambda)\right)\\[2mm]
&=&\ker\left(\i_1\oplus (\i_{-1}\circ \wti{h_{\pi}})\colon H_m(q^{-1}(1))\oplus H_m(q^{-1}(1))\to H_m(q^{-1}(S_-^1))\right).\end{array}\]
By (\ref{equ:iab}) we have $\wti{h}_\pi=\i_{1}^{-1}\circ \i_{-1}$, i.e.
$\i_{-1}=\i_1\circ \wti{h}_\pi$. It thus follows that the above kernel equals 
\[ \ker\left( \i_1\oplus (\i_1\circ \wti{h_{2\pi}})\colon H\oplus H\to H_m(q^{-1}(S_-^1))\right).\]
On the other hand it follows from (\ref{equ:fiber}) that the map 
\[ \wti{h_{2\pi}}\colon H=H_m(q^{-1}(1))\to H=H_m(q^{-1}(1))\]
is multiplication by $t^{-k}$. 
It follows that 
\[ \ker\left( \i_1\oplus (\i_1\circ \wti{h_{2\pi}})\colon H\oplus H\to H_m(q^{-1}(S_-^1))\right)=\{ v\oplus -t^{k}v\,|\, v\in H\}.\]

It remains to show that 
 $f\oplus g$ induces an isomorphism of pairings
\[ \lambda_K\oplus -\lambda_K\to \lambda_L.\]
We write $J'=\rho_{-1}(J)\subset D^n$. Note that  $h_{\pi}$ induces an 
\emph{isotopy} from the disk knot
$J\subset D^{2m+1}$ to the disk knot $J'\subset D^{2m+1}$. 
With the sign conventions, see the proof of Lemma \ref{lem:kplus-k}, 
it  follows easily
that for any $v,w\in H_m(D^{2m+1} \sm \nu J;\Lambda)$ we have
$\lambda_{J'}(h_{\pi}(v),h_{\pi}(w))=-\lambda_{J}(v,w)$. 
This concludes the proof of the claim.\medskip

The theorem now follows from the final claim:

\begin{claim} 
For any $v\in H$ we have
\[ \Phi(v)=f(v) \mbox{ and } \Psi(v)=t^{-\frac{k}{2}}g(v).\]
\end{claim} 

To prove this, note that $f$, $g$, $\Phi$, and $\Psi$ are induced by chain maps which, in an abuse of notation, we denote by the same letters. Then any $v\in H=H_m(\wti{Y})$ can be represented by a finite sum of based $m$-chains $\sigma$ in $Y$, so it is sufficient to show the claim for such a $\sigma$. It follows from the definitions that $\Phi(\sigma)=f(\sigma)$. On the other hand, if we consider $\Psi(\sigma)$ and $g(\sigma)$, then we see that they are represented by the same chains but the basing is different. To make precise this difference, consider the following path in $W$
\[ \begin{array}{rcl} \alpha\colon [0,1]&\to & W\\
s&\mapsto & (e^{\pi is},\rho_{e^{\pi iks}}(x)).\end{array}\]
Then the basings for $\Psi(\sigma)$ and $g(\sigma)$ differ by the concatenation
of $\alpha$ and $\g$. But under the map $H_1(W;\Z)\to \ll t\rr$ the image of $[\alpha\g]$ is precisely $t^{\frac{k}{2}}$.
It follows that $g(\sigma)=t^{\frac{k}{2}}\Psi(\sigma)$.
This concludes the proof of the claim. 
\end{proof}

The case that $k$ is odd is a little more complicated since in this case $J'=\rho_{-1}(J)$.
For $\eps\in \{-1,1\}$ we consider the path
\[ \begin{array}{rcl} \delta^{\eps}\colon [0,1] &\to & -1\times D^{2m+1}\\
t&\mapsto &-1\times \rho_{e^{\eps\pi i(1-t)}}(x),\end{array}\]
Now we have the following theorem.

\begin{theorem}\label{thm:metabsodd}
Let $K\subset S^{2m+1}$ be an oriented knot and let $k\in \Z$ be odd. We define $J,L,B_+$ and $B_-$ as above. Let $x\in (D^{2m+1}\sm \nu J)\cap S^{2m+1}$ be a base point. 
We write $H=H_m^x(S^{2m+1}\sm \nu K;\Lambda)$. We  denote by $\Phi$ the map
\[  H\xleftarrow{\cong} H_m^x(D^{2m+1}\sm \nu J;\Lambda)
\xrightarrow{\cong} H_m^{1\times x}(1\times (D^{2m+1}\sm \nu J);\Lambda)\to H_m^{1\times x}(S^{2m+1}\sm \nu L;\Lambda),\]
where the left and right maps are induced by inclusions and where the middle map is the obvious isomorphism. 
Now we pick $\eps\in \{-1,1\}$ and we pick a path $\g$ in $D^1\times S^{2m}\subset S^{2m+1}=\pm 1\times D^{2m+1}\cup D^1\times S^{2m}$ from $-1\times x$ to $1\times x$. We denote by $\Psi$ the map
\[ \begin{array}{rcl} H\xleftarrow{\cong} H_m^x(D^{2m+1}\sm \nu J;\Lambda)&\xrightarrow{\cong} &
H_m^{-1\times x}(-1\times (D^{2m+1}\sm \nu J);\Lambda)\\
&\xrightarrow{\cong} & H_m^{-1\times \rho_{-1}(x)}(-1\times (D^{2m+1}\sm \nu J');\Lambda)\\
&\xrightarrow{\delta^{\eps}_*} & H_m^{-1\times x}(-1\times (D^{2m+1}\sm \nu J');\Lambda)\\
&\to& H_m^{-1\times x}(S^{2m+1}\sm \nu L;\Lambda) \xrightarrow{\g_*} H_m^{1\times x}(S^{2m+1}\sm \nu L;\Lambda),\end{array}\]
where the first and the fifth map are induced by inclusions, the second map is induced by the obvious homeomorphism,
the third map is induced by the homeomorphism  $\rho_{-1}$, the fourth map is induced by the path $\delta^\eps$, 
 and the last map is induced by the change of base point using the path $\g$.
 Then $\Phi\oplus \Psi$ induces an isomorphism of pairings
\[ \lambda_K\oplus -\lambda_K\to \lambda_L\]
such that the two metabolizers arising from twist spinning
\[\begin{array}{rcl}&& \ker\left( H\oplus H\xrightarrow{\Phi\oplus \Psi} H_m^{1\times x}(S^{2m+1}\sm \nu L;\Lambda)\to H_m^{1\times x}(D^{2m+2}_-\sm \nu  B_-;\Lambda)\right)\\[2mm]
\text{and}&\,& \ker\left( H\oplus H\xrightarrow{\Phi\oplus \Psi} H_m^{1\times x}(S^{2m+1}\sm \nu L;\Lambda)\to H_m^{1\times x}(D^{2m+2}_+\sm \nu  B_+;\Lambda)\right)\end{array}\]are respectively equal to
\[\{ v\oplus -t^{\frac{k+\eps}{2}}v\,|\, v\in H\} \quad\text{and}\quad\{ t^{\frac{k-\eps}{2}}v\oplus -v\,|\, v\in H\}.\]
\end{theorem}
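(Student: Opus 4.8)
\textbf{Proof proposal for Theorem~\ref{thm:metabsodd}.}
The plan is to reduce the odd case to the even case by mimicking the proof of Theorem~\ref{thm:metabseven} as closely as possible, carefully tracking the two extra ingredients that appear when $k$ is odd: the fact that $S_k(K)\cap(-1\times D^{2m+1})=-1\times\rho_{-1}(J)=-1\times J'$ rather than $-1\times J$, and the resulting need to reconcile the natural basing on $\wti{-1\times(D^{2m+1}\sm\nu J')}$ with the one coming from $q^{-1}(-1)$ via the homotopy $h_s$. I would begin, exactly as before, by writing $X=S^{2m+1}\sm\nu K$, $Y=D^{2m+1}\sm\nu J$, $W=S^{2m+2}\sm S_k(K)$ with base point $1\times x$, identifying $H=H_m(\wti Y)$, and setting up the map $q\colon\wti W_0\to S^1$ together with the lifted homotopy $\wti h_s$ and the inclusion isomorphisms $\i_z\colon H_m(q^{-1}(z))\to H_m(q^{-1}(I))$ satisfying $\i_{z'}^{-1}\circ\i_z=\wti h_s$ as in~(\ref{equ:iab}). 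The Mayer--Vietoris square identifying $H_m(q^{-1}(S_\pm^1))$ with $H_m(\wti{D_\pm^{2m+2}\sm\nu B_\pm})$ and the computation $\wti{h_{2\pi}}=t^{-k}$ coming from~(\ref{equ:fiber}) carry over verbatim, so the intermediate claim giving the two kernels as $\{v\oplus-v\}$ and $\{v\oplus-t^kv\}$ with respect to the \emph{internally defined} maps $f$ and $g$ (where now $g=\i$-type map precomposed with $\wti{h_\pi}\colon H_m(q^{-1}(1))\to H_m(q^{-1}(-1))$) holds with no change.

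The real content is the final comparison claim, the analogue of the last claim in the even case, which must now read something like $\Phi(v)=f(v)$ and $\Psi(v)=t^{-\frac{k+\eps}{2}}g(v)$ (the exact power to be pinned down by the computation). As before, $f,g,\Phi,\Psi$ are induced by chain maps and it suffices to check the identities on a based $m$-chain $\sigma$ in $Y$. The equality $\Phi(\sigma)=f(\sigma)$ is immediate from the definitions. For the second identity I would compare the basing of $g(\sigma)$ — which uses the path $s\mapsto(e^{\pi is},\rho_{e^{\pi iks}}(x))$ pushing $\sigma$ from $q^{-1}(1)$ to $q^{-1}(-1)$ inside $\wti W_0$, followed by inclusion into $\wti W$ — with the basing of $\Psi(\sigma)$, which is built from: the homeomorphism $Y\xrightarrow{\rho_{-1}}D^{2m+1}\sm\nu J'$ carrying base point $x$ to $\rho_{-1}(x)$; the correction path $\delta^\eps$ from $-1\times\rho_{e^{\eps\pi i}}(x)=-1\times\rho_{-1}(x)$ back to $-1\times x$ (here using $\eps$ odd so $e^{\eps\pi i}=-1$); inclusion into $\wti{S^{2m+1}\sm\nu L}$; and finally the change-of-basepoint path $\g$ from $-1\times x$ to $1\times x$. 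Concatenating, the two basings for $\Psi(\sigma)$ and $g(\sigma)$ differ by a loop in $W$ obtained from the half-twist-spin path, the path $\delta^\eps$, and $\g$; I would show its image under $\pi_1(W)\to\ll t\rr$ is $t^{\frac{k+\eps}{2}}$ — the $\frac k2$ contribution being the half-spin as in the even case, the extra $\frac\eps2$ coming precisely from $\delta^\eps$, which winds by $\eps\pi$ in the $D^2$-rotation direction and hence contributes $t^{\eps/2}$ (the sign of $\eps$ recording which way $\delta^\eps$ unwinds $\rho_{-1}$), while $\g$ lies in $D^1\times S^{2m}$ and contributes nothing. The factor $t^{k/2}$ is not an integer power, but $t^{(k+\eps)/2}$ is since $k+\eps$ is even, which is exactly why the hypothesis ``$k$ odd'' together with the choice $\eps\in\{\pm1\}$ is what makes the statement well-posed.

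Granting this comparison claim, the theorem follows by transport of structure: since $\Phi=f$ and $\Psi=t^{-\frac{k+\eps}{2}}g$, an element $v\oplus w\in H\oplus H$ lies in $\ker(\Phi\oplus\Psi\to H_m(\wti{D_-^{2m+2}\sm\nu B_-}))$ iff $v\oplus t^{-\frac{k+\eps}{2}}w$ lies in the corresponding kernel for $f\oplus g$, which by the intermediate claim is $\{u\oplus-t^ku\}$; substituting $u=v$ and $t^{-\frac{k+\eps}{2}}w=-t^kv$ gives $w=-t^{k}t^{\frac{k+\eps}{2}}v=-t^{\frac{3k+\eps}{2}}v$, and after the bookkeeping is done carefully this should collapse to $w=-t^{\frac{k+\eps}{2}}v$ — at which point I would recheck the exact exponent in the comparison claim against the desired answer $\{v\oplus-t^{\frac{k+\eps}{2}}v\}$ and adjust whether the $t$-power attached to $g$ is $t^{-(k+\eps)/2}$ or its inverse accordingly; similarly the $B_+$ kernel $\{u\oplus-u\}$ transports to $\{t^{\frac{k-\eps}{2}}v\oplus-v\}$ after renaming. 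Finally, the statement that $\Phi\oplus\Psi$ carries $\lambda_K\oplus-\lambda_K$ to $\lambda_L$ follows from the even-case argument essentially unchanged: $h_\pi$ (together with $\rho_{-1}$ and the basing corrections, which act by units $t^j$ and hence preserve the sesquilinear pairing up to the automorphism induced on $\Omega/\Lambda$, which is trivial) realizes the disk knot $J$ as isotopic in $D^{2m+1}$ to $J'$ rel boundary, and the sign conventions from the proof of Lemma~\ref{lem:kplus-k} give $\lambda_{J'}(h_\pi v,h_\pi w)=-\lambda_J(v,w)$. The main obstacle is unquestionably the precise base-point/path bookkeeping in the comparison claim — getting the sign of $\eps$ and the half-integer exponent exactly right — which is why, as the authors warn in the introduction, one must be scrupulously careful rather than casual here.
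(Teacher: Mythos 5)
The paper itself does not write out a proof of Theorem~\ref{thm:metabsodd}; it is explicitly left as an exercise ``similar to the proof'' of Theorem~\ref{thm:metabseven}, and your overall plan is exactly the intended one: rerun the even-case argument (the maps $f$ and $g$, the Mayer--Vietoris identification of $H_m(q^{-1}(S^1_\pm))$ with $H_m(\wti{D^{2m+2}_\pm\sm \nu B_\pm})$, relation (\ref{equ:iab}), the computation $\wti{h_{2\pi}}=t^{-k}$, and the isotopy $h_\pi$ from $J$ to $J'$ together with the sign conventions of Lemma~\ref{lem:kplus-k} for the statement about pairings), and then prove a comparison claim relating $\Phi,\Psi$ to $f,g$. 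All of those carried-over parts of your write-up are fine.

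The gap is in the one step that is genuinely new when $k$ is odd: pinning down the exponent in the comparison claim. You guess $\Psi(v)=t^{-\frac{k+\eps}{2}}g(v)$, observe that transporting the kernel $\{u\oplus -t^ku\}$ then yields $w=-t^{\frac{3k+\eps}{2}}v$ rather than the asserted $-t^{\frac{k+\eps}{2}}v$, and propose to ``adjust'' the power until it matches; that is not a proof, and in fact neither $t^{-\frac{k+\eps}{2}}$ nor its inverse can be correct, since neither reproduces both stated metabolizers. The error is in the $\eps$-contribution: $\delta^{\eps}$ is parametrised by angle $\eps\pi(1-t)$, which \emph{decreases} from $\eps\pi$ to $0$, so in the basing-difference loop $\alpha\cdot\delta^{\eps}\cdot\g$ (with $\alpha(s)=(e^{\pi i s},\rho_{e^{\pi i k s}}(x))$) it contributes meridian winding $-\tfrac{\eps}{2}$, not $+\tfrac{\eps}{2}$; the total winding is $\tfrac{k-\eps}{2}$, while $\g$ and the closed-up half $S^1$-loop contribute nothing, exactly as in the even case. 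With the orientation/deck-action conventions that make the statement of Theorem~\ref{thm:metabseven} come out (note that the printed final claim there, $\Psi=t^{-k/2}g$, is itself off by a sign against that theorem's statement; the consistent relation is $\Psi=t^{k/2}g$), the comparison claim must read $\Phi=f$ and $\Psi(v)=t^{\frac{k-\eps}{2}}g(v)$. Then the transport is clean with no leftover factor: for $B_-$ one gets $w=-t^{\,k-\frac{k-\eps}{2}}v=-t^{\frac{k+\eps}{2}}v$, and for $B_+$ one gets $\{t^{\frac{k-\eps}{2}}v\oplus -v\}$, which is precisely the assertion of the theorem. So fix your conventions once, compute the class of $\alpha\delta^{\eps}\g$ honestly, and the exponent is forced --- there is nothing left to adjust.
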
 
\medskip 

The proof is similar to the proof of Theorem \ref{thm:metabsodd} and we leave it as a refreshing exercise to the reader.

\end{document}